\providecommand{\U}[1]{\protect\rule{.1in}{.1in}}
\newtheorem{theorem}{Theorem}[section]
\newtheorem{corollary}[theorem]{Corollary}
\newtheorem{definition}[theorem]{Definition}
\newtheorem{lemma}[theorem]{Lemma}
\newtheorem{remark}[theorem]{Remark}
\newcommand\nonumberthis{\nonumber\refstepcounter{equation}}
\numberwithin{equation}{section}
\title{Decomposition theorems for unital graph C*-algebras}
\author{Guillaume Bellier, Tatiana Shulman}
\begin{document}

\maketitle

\begin{abstract}  We prove that unital graph C*-algebras often admit a convenient decomposition into amalgamated free products. We use this to give a complete characterization of when a unital graph C*-algebra is residually finite-dimensional  and when it is operator norm stable (that is, matricially semiprojective).
\end{abstract}

\section{Introduction}

The class of graph C*-algebras is prominent in C*-theory, and numerous works are dedicated to studying how C*-algebraic properties of a graph C*-algebra are related with properties of the graph. In this work we characterize, for the class of all unital graph C*-algebras,  two properties that are of central importance in C*-theory:  residual finite-dimensionality (RFD property) and  operator norm stability (also called matricial semiprojectivity).

 As a crucial ingredient, we prove that graph C*-algebras of many graphs can be decomposed as amalgamated free products, a result that can be of independent interest \footnote{Such decomposition should not  be confused with push-ups of graphs and corresponding pull-backs for graph algebras  from \cite{Tobolski} -- a construction differing from the amalgamated free products,  in which all the arrows are reversed compared to those in the definition of amalgamated free product.}.

Recall that a graph C*-algebra is unital if and only the graph has finitely many vertices (and possible infinitely many edges).

 \bigskip

{\bf Theorem} (Theorems 3.2 and 3.3) \; {\it Let  $G = G_1\bigcup G_2$ be a graph with finitely many vertices such that  no edge of $G_2$ enters $G_1$. Let $n$ be the number of vertices shared by $G_1$ and $G_2$. Then

1) If $G^0_2 = G^0$, then $$C^*(G) \cong \left(C^*(G_1)\oplus \mathbb C\right) \ast_{ \mathbb C^{n+1}} C^*(G_2),$$

2) If $G^0_2 \neq G^0$, then  $$C^*(G) \cong \left(C^*(G_1)\oplus \mathbb C\right) \ast_{ \mathbb C^{n+2}} \left(C^*(G_2)\oplus \mathbb C\right).$$}


\medskip

A C*-algebra is {\it residually finite-dimensional} (RFD) if it has a separating family of finite-dimensional representations.
The property of being RFD plays an important role in several long-standing problems in operator algebras. E.g. Kirchberg's conjecture or,  equivalently, the Connes Embedding Problem, states that $C^*(F_2\times F_2)$ is RFD (see \cite{Taka})\footnote{Connes Embedding Problem was resolved in the negative in \cite{CEP}}. The question of when group C*-algebras  are RFD has useful implications in group theory.   On the purely C*-theoretic side,  the RFD property is important in the developments on the UCT conjecture \cite{Dadarlat1} and the problem of finding subalgebras of AF-algebras \cite{Dadarlat2}.
 Over the years, various
characterizations of RFD C*-algebras have been obtained,
and numerous classes of C*-algebras - too many to list -  have been shown to be RFD.

 The following theorem gives a complete characterization of when a unital graph C*-algebra is RFD.
 \bigskip

 {\bf Theorem } (Theorem 4.3) {\it Let $G$ be a graph a finitely many vertices. Then $C^*(G)$ is RFD if and only if no cycle has an entry.}

 \bigskip

 It is interesting that for arbitrary graphs, the property that no cycle has an entry is equivalent to quasidiagonality as proved in \cite{Schafhauser}. Quasidiagonality is an approximation property of C*-algebras that is strictly weaker than RFD, but for graph C*-algebras of finite graphs our theorem above shows that these properties coincide.


\medskip

Operator norm stability (also known as matricial stability or matricial semiprojectivity) is the property that every finite-dimensional approximate representation is close, in operator norm, to a genuine finite-dimensional representation. This notion has been studied for over fifty years and has found significant applications in Elliott's classification program for simple nuclear C*-algebras, as well as in K-theory and E-theory for C*-algebras. In the past decade, matricial semiprojectivity has also become a central topic in group theory, due to its connections with approximation conjectures for groups.

In \cite{EilersKatsura} Eilers and Katsura gave a complete description of which unital graph C*-algebras
are semiprojective and weakly semiprojective (see Preliminaries section for definitions). Here we give a complete description of which unital graph C*-algebras
are matricially semiprojective.

To formulate the result, we first consider a subgraph $G'$ of $G$ consisting of all paths that lead to cycles. Then we define a subgraph $\tilde G$ of $G$ in terms of reachability from $G'$, see the beginning of section 5 for precise definition of $\tilde G$.
\bigskip

{\bf Theorem } (Theorem 5.14) {\it  Let $G$ be a graph with finitely many vertices. Then $C^*(G)$ is matricially semiprojective if and only if $\tilde G$ is finite.}

\bigskip

\medskip

\noindent {\bf Acknowledgments. } The authors would like to express their sincere gratitude to Adam Dor-On for his careful reading of an earlier version of this paper and for his many insightful comments and suggestions, which significantly improved the exposition of the paper.

  The second named author was partially supported by a grant from the Swedish Research Council.

  \medskip

  The results of this article are part of the PhD project of the first
named author.

\section{Preliminaries}

\subsection{Graph C*-algebras}

A directed graph $G = (G^0, G^1, s, r)$  consists of two countable sets $G^0$ and $G^1$ and functions $r, s: G^1\to G^0$. The elements
of $G^0$ are called {\it vertices} and the elements of $G^1$ are called {\it edges}. For each edge $e$, $s(e)$ is the {\it source} of $e$ and $r(e)$ is the {\it range } of $e$.

\medskip

The {\it graph C*-algebra} $C^*(G)$ associated with a graph $G$  is the universal $C^*$-algebra with generators $\{p_v, s_e\;|\; v\in G^0, e\in G^1\}$, where $\{p_v, v\in G^0\}$ is a collection of mutually orthogonal projections, $\{s_e, e\in G^1\}$ is
 a collection of partial isometries with mutually orthogonal ranges, and the following three
 relations are satisfied:

\medskip

\begin{enumerate}





\item $s_e^*s_e = p_{s(e)}$, for each  $e\in G^1$,

\medskip

\item $p_v = \sum_{e\in r^{-1}(v)} s_es_e^*$, for each $v\in G^0$ such that $0< \sharp(r^{-1}(v)) < \infty$,

\medskip

\item $s_es_e^* \le p_{r(e)}$, for each  $e\in G^1$.

\end{enumerate}

\medskip

 The relations above are {\it Cuntz-Krieger relations} (CK-relations, for short).

$C^*(G)$ is unital precisely when $G^0$ is finite. In this case $\sum_{v\in G^0}p_v$ is the unit of $C^*(G)$.

Throughout this paper we will use same notation  (usually, p) for a vertex and the projection associated with it, and we will use  same notation (usually, e) for an edge and the partial isometry associated with it.

\medskip

A {\it path} in $G$ is a sequence of edges $\nu = (\nu_n, \ldots, \nu_1)$ such that $s(\nu_{i+1}) = r(\nu_i)$ for each $1\le i\le n$.  A {\it cycle} is a path $\mu = (\mu_n, \ldots, \mu_1)$ such that $s(\mu_1) = r(\mu_n)$.

Let $G^*$ be the set of all paths on $G$.

For a vertex $t$ let $n(t)$ be the number of paths that start at $t$, that is  $$n(t) = \sharp\{\nu\in G^*\;|\; s(\nu) = t\}.$$

We say that a vertex $t$ is a {\it source} if it does not receive any edges, that is  $r^{-1}(t) = \emptyset$.

We say that  {\it no cycle has an entry} meaning that no edge enters a cycle (besides the edges of the cycle itself, of course).

We will need the following two theorems about graph C*-algebras.

\begin{theorem}\label{tree}(\cite[Prop. 1.18]{Raeburn}, \cite[Th. 2.1.3]{Tomforde}\footnote{The terminology in \cite{Tomforde}  is different from \cite{Raeburn}, in particular what is a source in \cite{Raeburn} is a sink in \cite{Tomforde}.}) If $G$ is a finite graph with no cycles, and $v_1, \ldots, v_m$ are the sources of $G$, then
  $$C^*(G) \cong \bigoplus_{i=1}^m M_{n(v_i)}.$$ This isomorphism sends each source to a rank one projection.\footnote{This follows from the construction of the isomorphism. }
\end{theorem}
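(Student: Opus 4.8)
The plan is to write down an explicit isomorphism. Since $G$ is finite and has no cycles, the path set $G^*$ is finite and every vertex $t$ has a well-defined \emph{height} $h(t)$, the length of a longest path with range $t$; thus $h(t)=0$ exactly when $t$ is a source. For a source $v$ put $P(v)=\{\mu\in G^*\mid s(\mu)=v\}$, a set with $n(v)$ elements (it contains the trivial path at $v$). I would show that for each source $v$ the family $\{s_\mu s_\nu^*\mid \mu,\nu\in P(v)\}$ is a system of $n(v)\times n(v)$ matrix units, that the systems coming from distinct sources are mutually orthogonal, and that together they generate $C^*(G)$. This produces a $*$-homomorphism $\Phi\colon\bigoplus_{i=1}^m M_{n(v_i)}\to C^*(G)$ sending the $(\mu,\nu)$ matrix unit of the $i$-th block to $s_\mu s_\nu^*$; since it carries the $(1,1)$ matrix unit of the $i$-th block to $p_{v_i}=s_{v_i}s_{v_i}^*$, once $\Phi$ is shown to be an isomorphism the last assertion follows as well.

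The key computation is: for sources $v,w$ and $\mu\in P(v)$, $\nu\in P(w)$ one has $s_\mu^*s_\nu=p_v$ when $(v,\mu)=(w,\nu)$, and $s_\mu^*s_\nu=0$ otherwise. Writing $\mu=(\mu_k,\dots,\mu_1)$, $\nu=(\nu_l,\dots,\nu_1)$ and cancelling from the inside, $s_{\mu_k}^*s_{\nu_l}=0$ unless $\mu_k=\nu_l$ (distinct edges with a common range $u$ have orthogonal range projections $s_es_e^*$ by CK-relation~(4), applicable since $0<\sharp r^{-1}(u)<\infty$). Iterating, a nonzero $s_\mu^*s_\nu$ forces one of the two paths to be obtained from the other by deleting edges \emph{at the source end}; but the source end of a path in $P(v)$ is the vertex $v$, which receives no edge, so this is only possible when $\mu=\nu$ (whence $v=w$), and then $s_\mu^*s_\nu=p_{s(\mu)}=p_v$. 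Granting this, the relations $(s_\mu s_\nu^*)(s_\alpha s_\beta^*)=\delta_{\nu\alpha}\,s_\mu s_\beta^*$ and $(s_\mu s_\nu^*)^*=s_\nu s_\mu^*$ hold within a fixed $P(v)$, and products of matrix units attached to different sources vanish; since $M_{n(v)}$ is simple and $p_v\neq0$ in $C^*(G)$ (vertex projections of graph C*-algebras are nonzero), each block embeds, and orthogonality of the blocks makes $\Phi$ a well-defined injective $*$-homomorphism.

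For surjectivity it suffices to reach the generators $p_t$ and $s_e$. First I would prove, by induction on $h(t)$, the identity
$$p_t=\sum_{\substack{\mu\in G^*,\ r(\mu)=t\\ s(\mu)\ \text{is a source}}}s_\mu s_\mu^*.$$
If $h(t)=0$ then $t$ is a source, the only path with range $t$ is trivial, and both sides are $p_t$. If $h(t)>0$, CK-relation~(4) gives $p_t=\sum_{e\in r^{-1}(t)}s_es_e^*=\sum_{e\in r^{-1}(t)}s_e\,p_{s(e)}\,s_e^*$; substituting the inductive hypothesis for each $p_{s(e)}$ (valid as $h(s(e))<h(t)$) and using $s_es_\nu=s_{e\nu}$ (where $e\nu$ is the path $e$ appended to $\nu$) yields the identity, because every path with range $t$ starting at a source factors uniquely as an edge $e\in r^{-1}(t)$ followed by such a path with range $s(e)$. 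Summing over $t\in G^0$ identifies the unit $\sum_t p_t$ of $C^*(G)$ with $\sum_{v\ \text{source}}\sum_{\mu\in P(v)}s_\mu s_\mu^*$, so the image of $\Phi$ is unital and contains every $p_t$. Finally, for an edge $e$ with $s(e)=t$ and $r(e)=t'$, I would write $s_e=p_{t'}s_e p_t$, expand both projections by the displayed identity, and use the vanishing computation above to collapse the resulting double sum to $s_e=\sum_{\nu\in P(v),\ r(\nu)=t}s_{e\nu}s_\nu^*$ with $v=s(\nu)$; as $e\nu$ and $\nu$ both lie in $P(v)$, each term is in the image of $\Phi$. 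Hence $\Phi$ is onto, so it is an isomorphism, and $\Phi^{-1}(p_{v_i})$ is a rank one matrix unit, as claimed.

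The step I expect to be the main obstacle is the combinatorial bookkeeping in the last two reductions — the height induction for $p_t$ and the collapse of the double sum for $s_e$ — together with getting the matrix-unit computation exactly right. It is essential there that the paths emanate from \emph{sources}: that is precisely what forbids a proper overlap $s_\mu^*s_\nu\neq0$ with $\mu\neq\nu$ and makes the elements $s_\mu s_\nu^*$ genuine matrix units, while acyclicity enters through the finiteness of $G^*$ (hence of each $n(v)$) and the well-definedness of $h$.
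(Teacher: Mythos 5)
Your proposal is correct; the paper does not prove this statement itself (it only cites \cite[Prop.~1.18]{Raeburn} and \cite[Th.~2.1.3]{Tomforde}), and your argument --- building the matrix units $s_\mu s_\nu^*$ indexed by paths emanating from a fixed source, checking $s_\mu^* s_\nu=\delta_{\mu\nu}p_{s(\mu)}$ using that sources receive no edges, and then showing by the height induction that every $p_t$ and $s_e$ lies in the span --- is essentially the standard proof given in those references, including the observation that the trivial path at $v_i$ yields the rank one projection corresponding to $p_{v_i}$. The only external input is the standard fact that vertex projections of a graph C*-algebra are nonzero (e.g.\ via the path-space representation), which you need for injectivity and which is fine to quote.
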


\begin{theorem}(\cite[Th. 1.4.14]{Tomforde}, \cite[Ex.2.14]{Raeburn}) If $G$ is the graph consisting of a single cycle with $n$ vertices, then there is an isomorphism $\rho: C^*(G)\to M_n\otimes C(\mathbb T)$ given by
$$\rho(s_{e_i}) := \begin{cases} E_{(i+1)i}\otimes 1, \; \text{if} \; 1\le i\le n-1\\
E_{1n}\otimes id_{C(\mathbb T)}, \;\text{if}\; i=n,\end{cases}$$

$$\rho(p_{v_i}):= E_{ii}\otimes 1 \; \text{for} \; 1\le i\le n,$$

\noindent where $\{E_{ij} \}_{i, j=1}^n$ is the matrix unit in $M_n$.
\end{theorem}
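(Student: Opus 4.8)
The plan is to produce $\rho$ from the universal property of $C^*(G)$, check surjectivity by hand, and then prove injectivity by exhibiting $C^*(G)$ as a matrix algebra over a corner that is a copy of $C(\mathbb T)$.

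First I would verify that the prescribed elements satisfy the Cuntz--Krieger relations inside $M_n\otimes C(\mathbb T)$. Relations (1) and (2) are clear, the $E_{ii}\otimes 1$ being mutually orthogonal projections. For (3) one computes $\rho(s_{e_i})^*\rho(s_{e_i}) = E_{ii}\otimes 1 = \rho(p_{v_i})$ for every $i$ (for $i=n$ using $\overline{\mathrm{id}_{C(\mathbb T)}}\cdot\mathrm{id}_{C(\mathbb T)} = 1$). For (4): in a single cycle the vertex $v_{i+1}$ receives exactly the edge $e_i$ (indices mod $n$), and $\rho(s_{e_i})\rho(s_{e_i})^* = E_{(i+1)(i+1)}\otimes 1 = \rho(p_{v_{i+1}})$. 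The universal property then yields a $*$-homomorphism $\rho\colon C^*(G)\to M_n\otimes C(\mathbb T)$. It is onto: products of the $\rho(s_{e_i})$ and their adjoints produce every matrix unit $E_{ij}\otimes 1$, and $\rho(s_{e_n})\cdot(E_{n1}\otimes 1) = E_{11}\otimes\mathrm{id}_{C(\mathbb T)}$; together these generate $M_n\otimes C(\mathbb T)$.

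For injectivity, set $u := s_{e_n}s_{e_{n-1}}\cdots s_{e_1}$ and, for $2\le i\le n$, $t_i := s_{e_{i-1}}s_{e_{i-2}}\cdots s_{e_1}$, with $t_1 := p_{v_1}$. Iterating relation (3) together with $s_es_e^* = p_{r(e)}$ (valid here since every vertex of the cycle receives one edge) gives $u^*u = uu^* = p_{v_1}$, $t_i^*t_i = p_{v_1}$ and $t_it_i^* = p_{v_i}$; moreover $s_e^*s_f = 0$ for $e\neq f$ in this graph, so $t_i^*t_j = \delta_{ij}p_{v_1}$. Since the $p_{v_i}$ are orthogonal and sum to $1$, the family $\{t_it_j^*\}_{i,j=1}^n$ is a full system of $n\times n$ matrix units, so $C^*(G)\cong M_n(p_{v_1}C^*(G)p_{v_1})$ via $E_{ij}\otimes b\mapsto t_ibt_j^*$, and under this identification $\rho$ becomes $\mathrm{id}_{M_n}\otimes\rho_0$, where $\rho_0 := \rho|_{p_{v_1}C^*(G)p_{v_1}}\colon p_{v_1}C^*(G)p_{v_1}\to E_{11}\otimes C(\mathbb T)\cong C(\mathbb T)$ sends the unitary $u$ of the corner to $\mathrm{id}_{C(\mathbb T)}$. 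The structural point is that $p_{v_1}C^*(G)p_{v_1} = C^*(u)$: in the cycle graph the only paths from $v_1$ to $v_1$ are the powers of the loop $u$, so among the standard spanning elements $s_\mu s_\nu^*$ of $C^*(G)$ exactly the $u^m$, $m\in\mathbb Z$, survive multiplication by $p_{v_1}$ on both sides. Being generated by a single unitary, $p_{v_1}C^*(G)p_{v_1}\cong C(\sigma(u))$ for some closed $\sigma(u)\subseteq\mathbb T$.

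It then remains to see $\sigma(u) = \mathbb T$, equivalently that $\rho_0$ is injective, for then $\rho = \mathrm{id}_{M_n}\otimes\rho_0$ is an isomorphism. But $\rho_0$ is already surjective (its range contains the generator $\mathrm{id}_{C(\mathbb T)}$), hence by Gelfand duality it is induced by a continuous injection $\mathbb T\hookrightarrow\sigma(u)$; since $\rho_0$ carries the canonical generator $u$ of $C(\sigma(u))$ to $\mathrm{id}_{C(\mathbb T)}$, that injection must be the inclusion $\sigma(u)\hookrightarrow\mathbb T$, forcing $\sigma(u) = \mathbb T$ and making $\rho_0$ (hence $\rho$) an isomorphism. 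Alternatively, injectivity of $\rho$ follows at once from the gauge-invariant uniqueness theorem: $\rho(p_{v_i})\neq 0$ for all $i$, and $\rho$ intertwines the canonical gauge action on $C^*(G)$ with a gauge action on $M_n\otimes C(\mathbb T)$ obtained by combining $\mathrm{Ad}(\mathrm{diag}(1,z,\dots,z^{n-1}))$ with a rotation in the $C(\mathbb T)$ coordinate. The only genuinely delicate step is identifying the corner $p_{v_1}C^*(G)p_{v_1}$ with $C^*(u)$ and ruling out that it is a proper quotient of $C(\mathbb T)$; the rest is bookkeeping.
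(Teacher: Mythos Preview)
The paper does not give its own proof of this statement; it is quoted as a known result with references to \cite[Th.~1.4.14]{Tomforde} and \cite[Ex.~2.14]{Raeburn}. So there is no in-paper argument to compare against.

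Your argument is correct. The verification of the Cuntz--Krieger relations and surjectivity is routine and accurate. For injectivity, the matrix-unit system $\{t_it_j^*\}$ is valid (the key point $t_i^*t_j=0$ for $i\neq j$ holds because distinct edges in a single cycle have distinct ranges), and the identification $p_{v_1}C^*(G)p_{v_1}=C^*(u)$ follows from the description of the spanning family $\{s_\mu s_\nu^*\}$; one small imprecision is that the relevant $\mu,\nu$ need only have \emph{range} $v_1$ and a common source, not source $v_1$, but in a cycle every such pair still yields $s_\mu s_\nu^*=u^k(u^*)^l$, so the conclusion stands. The Gelfand-duality step is clean: surjectivity of $\rho_0$ forces the dual map $\mathbb T\to\sigma(u)$ to be the inclusion, hence $\sigma(u)=\mathbb T$ and $\rho_0$ is an isomorphism.

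Both of your approaches mirror the cited sources: the corner-and-matrix-units route is essentially what appears in Tomforde's notes, while the gauge-invariant uniqueness alternative is how Raeburn handles it. Either is perfectly adequate here.
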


In particular from the last theorem we observe the  following.

\begin{corollary}\label{cycle} Let $\rho$ be as in the theorem above. For $z\in \mathbb T$ let $\rho_z = ev_z\circ \rho$ (here we identify elements of $M_n\otimes C(\mathbb T)$ with $M_n$-valued continuous functions on $\mathbb T$). Then

\begin{enumerate}

\item for each $z\in \mathbb T$, $\rho_z$ is a representation that sends each $p_v$ to a rank one projection,

\item $\rho_z(p_v)$ does not depend on $z$,

\item for a dense subset $\mathcal F$ of $\mathbb T$, $\oplus_{z\in \mathbb T} \rho_z$ is injective.
\end{enumerate}
\end{corollary}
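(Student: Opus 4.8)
The plan is to read off all three statements from the explicit description of the isomorphism $\rho$ given in the preceding theorem, so the work is essentially bookkeeping.

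For (1), I would first note that $\rho_z = ev_z\circ\rho$ is a composition of $*$-homomorphisms: $\rho$ is a $*$-isomorphism onto $M_n\otimes C(\mathbb T)$, and, under the identification of $M_n\otimes C(\mathbb T)$ with the $*$-algebra $C(\mathbb T, M_n)$ of $M_n$-valued continuous functions, the evaluation $ev_z$ is a surjective $*$-homomorphism onto $M_n$. Hence $\rho_z$ is a genuine representation of $C^*(G)$ on $\mathbb C^n$. Since the theorem gives $\rho(p_{v_i}) = E_{ii}\otimes 1$, i.e. the constant function with value $E_{ii}$, we obtain $\rho_z(p_{v_i}) = ev_z(E_{ii}\otimes 1) = E_{ii}$, which is a rank one projection. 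Statement (2) is then immediate from the same computation: $\rho_z(p_{v_i}) = E_{ii}$ has no dependence on $z$.

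For (3), I would take $\mathcal F$ to be any countable dense subset of $\mathbb T$ (for instance the set of rational angles, or all roots of unity), so that $\bigoplus_{z\in\mathcal F}\mathbb C^n$ is a separable Hilbert space and $\bigoplus_{z\in\mathcal F}\rho_z$ is a well-defined representation on it. Because $\rho$ is an isomorphism, $\bigoplus_{z\in\mathcal F}\rho_z = \big(\bigoplus_{z\in\mathcal F}ev_z\big)\circ\rho$ is injective if and only if $\bigoplus_{z\in\mathcal F}ev_z$ is injective on $C(\mathbb T, M_n)$. Now if $f\in C(\mathbb T, M_n)$ lies in the kernel of every $ev_z$ with $z\in\mathcal F$, then $f$ vanishes on the dense set $\mathcal F$, hence by continuity $f\equiv 0$ on all of $\mathbb T$. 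Thus $\bigoplus_{z\in\mathcal F}ev_z$, and therefore $\bigoplus_{z\in\mathcal F}\rho_z$, is injective. (I would phrase the statement of the corollary with $\bigoplus_{z\in\mathcal F}$ rather than $\bigoplus_{z\in\mathbb T}$, since the latter is not separable; for the applications a countable $\mathcal F$ suffices.)

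There is no real obstacle here — the only substantive point is the elementary fact that a continuous function on $\mathbb T$ that vanishes on a dense subset vanishes identically. The corollary is, in effect, just an unpacking of the structure theorem for the $C^*$-algebra of a single cycle, recorded in the form that will be convenient later.
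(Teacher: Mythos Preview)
Your proposal is correct and matches the paper's approach: the paper states this corollary without proof, treating all three parts as immediate observations from the explicit isomorphism $\rho$ given in the preceding theorem, which is exactly what you do. Your remark about the typo $\bigoplus_{z\in\mathbb T}$ versus $\bigoplus_{z\in\mathcal F}$ is also well taken.
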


\subsection{Amalgamated free products.}

\begin{definition} Let $A, B, D$ be unital C*-algebras with unital embeddings $\theta_A: D \to A$ and $\theta_B: D \to B$. The unital full amalgamated free product of $A$ and $B$ over $D$ is the C*-algebra $C$, equipped with unital embeddings $i_A: A \to C$ and $i_B: B\to C$ satisfying $i_A\circ \theta_A = i_B\circ \theta_B$, such that $C$ is generated by $i_A(A)\bigcup i_B(B)$ and satisfies the following universal property:

whenever $\mathcal E$ is a unital C*-algebra and $\pi_A: A \to \mathcal E$ and $\pi_B: B \to \mathcal E$ are unital $\ast$-homomorphisms satisfying $\pi_A\circ\theta_A = \pi_B\circ \theta_B$, there is  a unital  $\ast$-homomorphism $\pi: C \to \mathcal E$ such that $\pi\circ i_A = \pi_A$ and $\pi\circ i_B = \pi_B$.
\end{definition}

Standardly the (unital) amalgamated free product $C$ is denoted by $A\ast_D B$.

\medskip

The following theorem of Li and Shen  gives a necessary and sufficient condition for a unital  amalgamated free product over a finite-dimensional C*-subalgebra to be RFD. A different proof and a non-unital version are obtained in \cite[Th. 4.14]{EndersShulman}.

\begin{theorem}\label{LiShen}(Li and Shen \cite{LiShen}) Let $A$ and $B$ be unital C*-algebras, $F$ a finite-dimensional C*-algebra and $\theta_A: F \to A$, $\theta_B: F \to B$ unital inclusions. Then the corresponding unital amalgamated free product $A\ast_F B$ is RFD if and only if there exist unital inclusions $\phi_A: A\to \prod M_n, \phi_B: B \to \prod M_n$ such that $\phi_A\circ \theta_A = \phi_B\circ \theta_B$.
\end{theorem}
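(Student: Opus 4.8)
The plan is to prove the two implications separately, with the forward (necessity) direction being short and the backward (sufficiency) direction carrying all the weight.

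For necessity, suppose $C := A \ast_F B$ is RFD, so there is a unital embedding $\Phi \colon C \to \prod M_n$. The canonical homomorphisms $i_A \colon A \to C$ and $i_B \colon B \to C$ are injective; this is standard for full amalgamated free products, and one checks it directly here by noting that, because $F$ is finite-dimensional, any faithful unital representation of $A$ (amplified so that its $F$-part has infinite multiplicities) admits a partner representation of $B$ on the same Hilbert space agreeing with it on $F$, so the resulting pair induces a representation of $C$ that is faithful on $i_A(A)$. Setting $\phi_A := \Phi \circ i_A$ and $\phi_B := \Phi \circ i_B$ then gives unital embeddings into the same $\prod M_n$, and the amalgamation identity $i_A \circ \theta_A = i_B \circ \theta_B$ immediately yields $\phi_A \circ \theta_A = \phi_B \circ \theta_B$.

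For sufficiency, write $\prod M_n = \prod_{k} M_{n_k}$ and let $\alpha_k := \pi_k \circ \phi_A$ and $\beta_k := \pi_k \circ \phi_B$ be the coordinate representations, so that $\{\alpha_k\}$ separates $A$, $\{\beta_k\}$ separates $B$, and $\alpha_k \circ \theta_A = \beta_k \circ \theta_B =: \tau_k$ for every $k$. Since $C$ is the \emph{full} amalgamated free product, $\|c\| = \sup \|\pi(c)\|$ where $\pi$ ranges over all representations of $C$, equivalently over all pairs $(\rho_A, \rho_B)$ of representations on a common Hilbert space with $\rho_A \circ \theta_A = \rho_B \circ \theta_B$. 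Thus it suffices to approximate, for each such pair and each $\epsilon > 0$, the quantity $\|\pi(c)\|$ to within $\epsilon$ by a finite-dimensional representation of $C$, that is, by a pair of finite-dimensional representations of $A$ and $B$ agreeing on $F$. The whole point of the hypothesis is that it supplies such compatible pairs in abundance: for each $k$ and each unitary $u$ in the relative commutant $\tau_k(F)'$ (a finite-dimensional C*-algebra, since $F$ is finite-dimensional), the conjugate $\mathrm{Ad}(u) \circ \beta_k$ still restricts to $\tau_k$ on $F$, so $(\alpha_k, \mathrm{Ad}(u)\circ \beta_k)$ induces a finite-dimensional representation $\sigma_{k,u}$ of $C$ by the universal property.

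The main work, and the expected obstacle, is to show that this family of induced representations attains the supremum defining $\|c\|$. The strategy is to fix a pair $(\rho_A, \rho_B)$ nearly attaining $\|c\|$, amplify it so that its common $F$-representation has infinite multiplicities, and then approximate it using the coordinate data: since $\bigoplus_k \alpha_k$ is faithful on $A$ and $\bigoplus_k \beta_k$ is faithful on $B$, a Voiculescu--Hadwin approximate-equivalence argument lets one approximately realize $\rho_A$ and $\rho_B$ inside suitable amplifications of the $\alpha_k$ and $\beta_k$. The difficulty is that applying Voiculescu's theorem to $A$ and to $B$ separately would destroy the agreement on $F$; one must instead run the approximation \emph{relative} to the fixed finite-dimensional algebra $F$, so that the two approximating representations can afterwards be brought into the required relative position by a single unitary in the relevant relative commutant. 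Here the finite-dimensionality of $F$ is essential: it guarantees that $F$ has only finitely many irreducible representations, all of which occur among the $\tau_k$ (as $\bigoplus_k \tau_k = (\phi_A \circ \theta_A)$ is faithful on $F$), and that the relevant commutants are finite-dimensional and controllable, so that, after amplification to create enough multiplicity, a compatible conjugation by a unitary in $\tau_k(F)'$ suffices to align the $A$- and $B$-parts while preserving the $F$-part exactly. Carrying this out produces representations $\sigma_{k,u}$ (on amplified blocks) with $\|\sigma_{k,u}(c)\|$ arbitrarily close to $\|c\|$; since $c$ was arbitrary, the finite-dimensional representations of $C$ separate points, i.e.\ $C$ is RFD.
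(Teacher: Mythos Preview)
The paper does not prove this theorem; it is quoted as a known result from Li--Shen \cite{LiShen} (with an alternative proof cited from \cite{EndersShulman}) and is used as a black box in the proof of Theorem~A. There is therefore no in-paper proof to compare your proposal against.

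As for the proposal itself: the necessity direction is correct and complete. The sufficiency direction identifies the right architecture---compatible finite-dimensional pairs coming from the coordinates of $\prod M_n$, conjugation by unitaries in the relative commutant $\tau_k(F)'$ to create enough such pairs, and an approximation argument \`a la Voiculescu to show they suffice---and correctly flags the essential obstacle, namely that the approximation must be carried out \emph{relative to $F$} so that the $A$- and $B$-parts remain compatible on $F$. This is indeed the line of argument in the original Li--Shen paper. However, your write-up stops at the point where the real work begins: the phrase ``Carrying this out produces representations $\sigma_{k,u}$\ldots'' hides the entire technical content. One must actually prove a relative approximate-equivalence statement (roughly: two representations of $A$ that restrict to the same representation of $F$ with infinite multiplicities are approximately unitarily equivalent via unitaries in the commutant of $F$), and then argue that the multiplicities can be matched simultaneously for $A$ and $B$ using only amplifications of the given $\alpha_k$, $\beta_k$. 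None of this is automatic, and the finite-dimensionality of $F$ enters in a more hands-on way than your sketch indicates. So the plan is sound, but as written it is an outline rather than a proof.
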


Throughout this paper all amalgamated free products will be unital ones.

\subsection{Matricial semiprojectivity}

\begin{definition}  A $C^*$-algebra $A$ is {\bf semiprojective}  if for any $C^*$-algebra $A$, any increasing chain of ideals $I_n$
in $A$  and every $\ast$-homomorphism $\varphi:A \to B/I$, where   $I=\overline{\bigcup I_n}$, there exist
 $n\in \mathbb N$ and  a $\ast$-homomorphism $\overline \varphi : A\to B/I_n$ making the following diagram commute:
$$\xymatrix{ & B \ar@{->>}[d]\\& B/I_n \ar@{->>}[d] \\ A\ar@{-->}[ru]^{\overline \varphi}\ar[r]^{\varphi} & B/I}$$
\end{definition}

\bigskip

For C*-algebras $B_n$, $n\in \mathbb N$, let us consider the following two C*-algebras of sequences with entries in $B_n$, $n\in \mathbb N$:
$$\prod B_n = \{(b_n)\;|\; b_n \in B_n, \; \sup \|b_n\| < \infty\},$$
$$\bigoplus B_n = \{(b_n)\in \prod B_n\;|\; \lim \|b_n\| = 0\}.$$ Clearly $\bigoplus B_n$ is  an ideal in $\prod B_n$.

\begin{definition} Let $\mathcal B$ be a class of C*-algebras. A C*-algebra $A$ is {\bf weakly semiprojective with respect to $\mathcal B$} if for any sequence $B_n\in \mathcal B$, any $\ast$-homomorphism  $\varphi: A\to \prod B_k/\bigoplus B_k$ lifts to a $\ast$-homomorphism  $\overline{\varphi}: A\to \prod B_k$ with $\varphi = \pi\circ \overline{\varphi}.$
\end{definition}
If the class $\mathcal B$ above is the class of all $C^*$-algebras then we say that $A$ is {\bf weakly semiprojective}.

Let $M_n$ denote the C*-algebra  of all $n\times n$-matrices with complex entries.

\begin{definition}\label{def msp}
A separable $C^*$-algebra $A$ is {\bf matricially semiprojective} if one of the following equivalent conditions holds:
\begin{enumerate}
\item Any $\ast$-homomorphism
$$\varphi\colon A\to\prod M_{n} / \bigoplus M_{n}$$
lifts to a $\ast$-homomorphism $\overline{\varphi}\colon A\to\prod M_{n}$ with $\pi\circ\overline{\varphi}=\varphi$.
\item For any sequence $F_n$ of finite-dimensional $C^*$-algebras, any $\ast$-homomorphism
$$\varphi\colon A\to\prod F_n / \bigoplus F_n$$
lifts to a $\ast$-homomorphism $\overline{\varphi}\colon A\to\prod F_n$ with $\pi\circ\overline{\varphi}=\varphi$, i.e.\ $A$ is weakly semiprojective with respect to the class of finite-dimensional $C^*$-algebras.
\end{enumerate}
\end{definition}

Of course semiprojectivity implies weak semiprojectivity, and weak semiprojectivity implies matricial semiprojectivity.

\section{Decomposition theorems}

Suppose $G= G_1\bigcup G_2$ with non-empty $G_1, G_2$. We assume that $G_1$ and $G_2$ share only some vertices but not edges.

The next lemma contains an observation which will be crucial for the rest of this section.

\begin{lemma}\label{crucial} Suppose $G=G_1\bigcup G_2$ and no edge of $G_2$ enters $G_1$. Then
$$\{\text{CK-relations of} \;G\} = \{\text{CK-relations of} \;G_1\}\; \bigcup \;\{\text{CK-relations of} \;G_2\}.$$
\end{lemma}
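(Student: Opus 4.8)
The plan is to unwind the definition of the Cuntz--Krieger relations and check the claimed set equality by examining each of the four families of relations (1)--(4) separately. The inclusion $\supseteq$ should be essentially immediate: every vertex of $G_1$ (resp. $G_2$) is a vertex of $G$, every edge of $G_1$ (resp. $G_2$) is an edge of $G$, and the source/range maps of $G_i$ are the restrictions of those of $G$, so relations of type (1), (2), (3) for $G_i$ are literally relations of the same type for $G$. The only subtlety is relation (4), the summation relation at a vertex $v$ that receives finitely many but at least one edge: one must check that the set $r_{G_i}^{-1}(v)$ of edges of $G_i$ with range $v$ actually equals the set $r_G^{-1}(v)$ of all edges of $G$ with range $v$, for every vertex $v$ at which the relation is imposed in $G_i$.

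For the $\subseteq$ direction I would argue relation by relation. A type (1) or (2) relation of $G$ involves vertices of $G = G_1 \cup G_2$, hence vertices of $G_1$ or of $G_2$ (type (2) also needs the two vertices in a common $G_i$, which holds because if one lies only in $G_1$ and the other only in $G_2$ there is nothing to prove in the common algebra — actually type (2) relations $p_v p_w = 0$ for $v \ne w$ need a little care and I'd note that such a relation only needs to be recorded when $v,w$ lie in the same $G_i$, or observe it follows from orthogonality already present). A type (3) relation is attached to an edge $e \in G^1 = G_1^1 \cup G_2^1$, so it is a type (3) relation of whichever $G_i$ contains $e$. The crux is again type (4): given $v \in G^0$ with $0 < \sharp(r_G^{-1}(v)) < \infty$, I must show $r_G^{-1}(v)$ coincides with $r_{G_i}^{-1}(v)$ for an appropriate $i$, so that the relation of $G$ at $v$ is exactly the relation of $G_i$ at $v$.

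The main obstacle — and the place where the hypothesis ``no edge of $G_2$ enters $G_1$'' is used — is precisely this analysis of $r^{-1}(v)$. Here is how I expect it to go. Fix $v$ with $r_G^{-1}(v)$ finite and nonempty. If $v \notin G_1^0$, then every edge with range $v$ lies in $G_2$ (an edge of $G_1$ ranges in $G_1^0$), so $r_G^{-1}(v) = r_{G_2}^{-1}(v)$ and the relation is the type (4) relation of $G_2$. If $v \in G_1^0$, I claim no edge of $G_2$ has range $v$: such an edge would be an edge of $G_2$ whose range lies in $G_1^0$, i.e. an edge of $G_2$ entering $G_1$, contradicting the hypothesis. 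Hence $r_G^{-1}(v) = r_{G_1}^{-1}(v)$ and the relation is the type (4) relation of $G_1$. Conversely, to get $\supseteq$ for type (4), if $G_i$ imposes the relation at $v$ then $r_{G_i}^{-1}(v)$ is finite and nonempty; the same case split (using the hypothesis when $i=2$ and $v$ happens to lie in $G_1^0$, which forces $r_{G_2}^{-1}(v)$ to already be all of $r_G^{-1}(v)$) shows $r_{G_i}^{-1}(v) = r_G^{-1}(v)$, so it is the relation of $G$ at $v$. I would also remark explicitly that one must be careful about vertices $v$ shared by $G_1$ and $G_2$: the content of the hypothesis is exactly that at such a shared vertex all incoming edges already belong to $G_1$, so there is no ``mixing'' that would make the relation of $G$ strictly stronger than the relation of $G_1$. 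Assembling these cases gives the asserted equality of relation sets.
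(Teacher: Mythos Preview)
Your proposal is correct and follows the same approach as the paper: both identify that the only nontrivial point is the type-(4) summation relation at a vertex $v\in G_1^0$, where the hypothesis ``no edge of $G_2$ enters $G_1$'' guarantees that every edge of $G$ with range $v$ already lies in $G_1^1$, so the relation of $G$ at $v$ coincides with the relation of $G_1$ at $v$. The paper's proof is terser and treats only this key case explicitly, whereas you work systematically through all four relation types and even flag the type-(2) subtlety (orthogonality of a $G_1$-only vertex with a $G_2$-only vertex) that the paper's brief argument passes over.
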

\begin{proof} When $G=G_1\bigcup G_2$, the only CK-relations of $G$ that could possibly contain edges from both $G_1$ and $G_2$
are of the form $$p_i = \sum_{e\in G^1, r(e) = p_i} ee^*,$$ where $p_i$ is one of the shared vertices. But since in our case
no edge of $G_2$ enters $G_1$, the condition $r(e) = p_i$ implies that $e\in G_1^1$. Therefore
$$p_i = \sum_{e\in G^1, r(e) = p_i} ee^* = \sum_{e\in G_1^1, r(e) = p_i} ee^*$$ is a CK-relation for $G_1$.
\end{proof}

\medskip

When dealing with $G=G_1\bigcup G_2$,  we will use the following notation.
For a vertex $p$ in $G_1$, the corresponding projection in $C^*(G)$ is also denoted by $p$ as already was mentioned in Preliminaries, and the corresponding projection in $C^*(G_1)$ will be denoted by $\bar p$.

 For a vertex $p$ in $G_2$, the corresponding projection in $C^*(G)$ is denoted by $p$, and the corresponding projection in $C^*(G_2)$ is denoted by $\bar{\bar p}$. Similar notation we use for partial isometries corresponding to the edges.

\medskip

Common vertices of $G_1$ and $G_2$ will be denoted by $p_i$. Vertices in $G_1\setminus G_2$ will be denoted by $p_{\alpha}$. Vertices in $G_2\setminus G_1$ will be denoted by $p_{\beta}$.

\bigskip

We will need to consider separately the two possible cases:     $G^0_2 = G^0$
  and
  $\nolinebreak{G^0_2 \neq G^0}$. Examples are given in Figure \ref{exemple}.

\tikzset{every loop/.style={min distance=10mm,in=120,out=240,looseness=40}}
\begin{figure}
\centering
\begin{subfigure}[t]{0.4\textwidth}
\centering
 \begin{tikzpicture}[node distance={15mm}, thick, main/.style = {draw, fill, circle, inner sep=1.5pt},mainLarge/.style = {draw, circle, minimum size=30mm}]
 \useasboundingbox (-2,-0.5) rectangle (4,2.5);

 \def\Unit{0.5}

\node[main, label=below:$ $,black] (P1) at (0*\Unit,2*\Unit){};
\node[main, label=below:$ $,blue] (P2) at (2*\Unit,4*\Unit){};
\node[main, label=below:$ $,blue] (P3) at (2*\Unit,2*\Unit){};
\node[main, label=below:$ $,blue] (P4) at (2*\Unit,0*\Unit){};

\path[-angle 90,font=\scriptsize]
(P1) edge [loop left,"",red]   (P1)

(P1) edge ["",blue]   (P2)
(P1) edge ["",blue]   (P3)
(P1) edge ["",blue]   (P4);

\end{tikzpicture}
   \caption*{Case $G^0_2 = G^0$}
\end{subfigure}
\hfill
\begin{subfigure}[t]{0.4\textwidth}
\centering

\begin{tikzpicture}[node distance={15mm}, thick, main/.style = {draw, fill, circle, inner sep=1.5pt},mainLarge/.style = {draw, circle, minimum size=30mm}]
\useasboundingbox (-0.5,-0.5) rectangle (4.5,2.5);

\def\Unit{0.5}

\node[main, label=below:$ $,red] (P1) at (0*\Unit,1*\Unit){};
\node[main, label=below:$ $,red] (P2) at (2*\Unit,0*\Unit){};
\node[main, label=below:$ $,black] (P3) at (4*\Unit,1*\Unit){};
\node[main, label=below:$ $,black] (P4) at (4*\Unit,3*\Unit){};
\node[main, label=below:$ $,red] (P5) at (2*\Unit,4*\Unit){};
\node[main, label=below:$ $,red] (P9) at (0*\Unit,3*\Unit){};
\node[main, label=below:$ $,blue] (P6) at (8*\Unit,4*\Unit){};
\node[main, label=below:$ $,blue] (P7) at (8*\Unit,2*\Unit){};
\node[main, label=below:$ $,blue] (P8) at (8*\Unit,0*\Unit){};

\path[-angle 90,font=\scriptsize]
(P1) edge ["",red]   (P2)
(P2) edge ["",red]   (P3)
(P3) edge ["",red]   (P4)
(P4) edge ["",red]   (P5)
(P5) edge ["",red]   (P9)
(P9) edge ["",red]   (P1)

(P4) edge ["",blue]   (P6)
(P3) edge ["",blue]   (P6)
(P3) edge ["",blue]   (P7)
(P3) edge ["",blue]   (P8);
\end{tikzpicture}
 \caption*{Case $G^0_2 \neq G^0$}
\end{subfigure}
\caption{Examples}
\label{exemple}
\end{figure}



\subsection{Case $G^0_2 = G^0$.}

We can assume that $G^0_1 \neq G^0$. Indeed, if $G^0_1 = G^0$, then, since we assume that no edge of $G_2$ enters $G_1$, we have $G_1=G$ and $G_2$ is just the collection of all vertices of $G$ with no edges at all, so this case is trivial. 

\begin{theorem}\label{case1}  Let  $G = G_1\bigcup G_2$ be a graph with finitely many vertices such that $G^0_2 = G^0$, $G^0_1 \neq G^0$,  and no edge of $G_2$ enters $G_1$. Let $n$ be the number of vertices shared by $G_1$ and $G_2$. Then $$C^*(G) \cong \left(C^*(G_1)\oplus \mathbb C\right) \ast_{ \mathbb C^{n+1}} C^*(G_2),$$ where amalgamation  is done via the unital embeddings $\theta_1: \mathbb C^{n+1} \to C^*(G_1)\oplus \mathbb C$ and $\theta_2: \mathbb C^{n+1} \to C^*(G_2)$ defined by
$$\theta_1(\lambda_1, \ldots, \lambda_{n+1}) = (\sum_{i=1}^n \lambda_i\bar p_i, \lambda_{n+1}),$$
$$\theta_2(\lambda_1, \ldots, \lambda_{n+1}) = \sum_{i=1}^n \lambda_i\bar{\bar p}_i + \lambda_{n+1}\sum_{\beta} \bar{\bar p}_{\beta},$$
$\lambda_1, \ldots, \lambda_{n+1} \in \mathbb C$.
\end{theorem}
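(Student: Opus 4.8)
The plan is to produce mutually inverse unital $\ast$-homomorphisms between $C^*(G)$ and the amalgamated free product $C := \left(C^*(G_1)\oplus\mathbb C\right)\ast_{\mathbb C^{n+1}}C^*(G_2)$, using the universal property of $C^*(G)$ in one direction and the universal property of the amalgamated free product in the other. Write $A = C^*(G_1)\oplus\mathbb C$, $B = C^*(G_2)$, and let $i_A\colon A\to C$, $i_B\colon B\to C$ denote the canonical embeddings, so that $i_A\circ\theta_1 = i_B\circ\theta_2$. A first useful observation: since $G^0_2 = G^0$, every vertex of $G_1$ is shared, so there are no vertices of type $p_\alpha$; hence $G^0 = \{p_i\}\sqcup\{p_\beta\}$, the unit of $C^*(G_1)$ is $\sum_i\bar p_i$, and both $\theta_1$ and $\theta_2$ are unital (and injective, as the vertex projections are nonzero). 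This already explains the shape of the decomposition: $C^*(G_1)\hookrightarrow C^*(G)$ fails to be unital exactly because of the $p_\beta$'s, and the extra $\mathbb C$ summand (and the extra coordinate of $\mathbb C^{n+1}$) is there to absorb the complementary projection $\sum_\beta p_\beta = 1-\sum_i p_i$.

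\textbf{Construction of $\Phi\colon C^*(G)\to C$.} Since $C^*(G)$ is the universal C*-algebra on its generators subject to the CK-relations, it suffices to exhibit a family in $C$ indexed by the vertices and edges of $G$ satisfying those relations. Put $P_i := i_A(\bar p_i,0) = i_B(\bar{\bar p}_i)$ for shared vertices (the two expressions agree because $\theta_1$ and $\theta_2$ send the $i$-th coordinate vector of $\mathbb C^{n+1}$ to $(\bar p_i,0)$ and $\bar{\bar p}_i$), $P_\beta := i_B(\bar{\bar p}_\beta)$, $S_e := i_A(\bar e,0)$ for $e\in G_1^1$, and $S_f := i_B(\bar{\bar f})$ for $f\in G_2^1$. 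Applying the $\ast$-homomorphism $a\mapsto(a,0)$ from $C^*(G_1)$ to $A$ and then $i_A$, the family $\{P_i,S_e\}$ satisfies the CK-relations of $G_1$; applying $i_B$ to the generators of $C^*(G_2)$, the family $\{P_i,P_\beta,S_f\}$ satisfies the CK-relations of $G_2$. By Lemma~\ref{crucial} these two sets of relations together are precisely the CK-relations of $G$, and they are mutually consistent on the overlap $\{P_i\}$; so the universal property produces a $\ast$-homomorphism $\Phi$ with $\Phi(p_v) = P_v$ and $\Phi(e) = S_e$, unital since $\sum_i P_i+\sum_\beta P_\beta = i_B(1_B) = 1_C$.

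\textbf{Construction of $\Psi\colon C\to C^*(G)$.} Here I invoke the universal property of the amalgamated free product, so I need unital $\ast$-homomorphisms $\pi_B\colon C^*(G_2)\to C^*(G)$ and $\pi_A\colon C^*(G_1)\oplus\mathbb C\to C^*(G)$ with $\pi_A\circ\theta_1 = \pi_B\circ\theta_2$. Define $\pi_B$ by $\bar{\bar p}_v\mapsto p_v$, $\bar{\bar f}\mapsto f$; it respects the CK-relations of $G_2$ (a sub-collection of those of $G$ by Lemma~\ref{crucial}) and is unital because $G^0_2 = G^0$. Next, $\bar p_v\mapsto p_v$, $\bar e\mapsto e$ defines a non-unital $\ast$-homomorphism $\psi\colon C^*(G_1)\to C^*(G)$ with $\psi(1_{C^*(G_1)}) = \sum_i p_i$; since $1-\sum_i p_i = \sum_\beta p_\beta$ is a projection orthogonal to the image of $\psi$, the formula $\pi_A(a,\lambda) := \psi(a)+\lambda\sum_\beta p_\beta$ defines a unital $\ast$-homomorphism on the direct sum. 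A direct substitution shows $\pi_A\circ\theta_1 = \pi_B\circ\theta_2$ (both send $(\lambda_1,\dots,\lambda_{n+1})$ to $\sum_i\lambda_i p_i+\lambda_{n+1}\sum_\beta p_\beta$), so $\Psi$ exists with $\Psi\circ i_A = \pi_A$ and $\Psi\circ i_B = \pi_B$.

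\textbf{Mutual inverses and the main obstacle.} Finally one checks $\Psi\circ\Phi = \mathrm{id}_{C^*(G)}$ and $\Phi\circ\Psi = \mathrm{id}_C$ on generators: $\Psi(\Phi(p_v)) = p_v$ and $\Psi(\Phi(e)) = e$ are immediate; and $\Phi\circ\pi_A = i_A$, $\Phi\circ\pi_B = i_B$ follow by evaluating on the generators of $A$ and $B$ — the only slightly non-obvious case being the extra unit $(0,1)\in A$, which $\pi_A$ sends to $\sum_\beta p_\beta$ and $\Phi$ then sends to $i_B(\sum_\beta\bar{\bar p}_\beta) = i_A(0,1)$. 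As $i_A(A)\cup i_B(B)$ generates $C$, this gives $\Phi\circ\Psi = \mathrm{id}_C$, completing the isomorphism. I expect no genuine obstacle here beyond the bookkeeping around the $\mathbb C$ summand: the key structural input, that the relations split as in Lemma~\ref{crucial}, is already available, and everything else reduces to unwinding universal properties and checking CK-relations. The one point to get right is the definition of $\pi_A$ on the direct sum (equivalently, identifying $\sum_\beta p_\beta$ as the correct complementary projection), after which all verifications are routine.
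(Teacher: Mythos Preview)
Your proposal is correct and follows essentially the same approach as the paper: build maps in both directions using the universal property of $C^*(G)$ and of the amalgamated free product (your $\Phi,\Psi$ are precisely the paper's $\psi,\phi$, just constructed in the opposite order), invoke Lemma~\ref{crucial} to see that the CK-relations split, and verify the inverse identities on generators, with the only non-obvious check being the image of $i_A(0,1)$. One small point the paper makes explicit and you leave implicit: the set $\{p_\beta\}$ is nonempty (otherwise $\theta_2(\delta_{n+1})=0$ and $\theta_2$ is not an embedding); this follows because any edge of $G_2$ must have range outside $G_1^0$.
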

\begin{proof} First we will construct a unital $\ast$-homomorphism $\phi: \left(C^*(G_1)\oplus \mathbb C\right) \ast_{ \mathbb C^{n+1}} C^*(G_2) \to C^*(G)$. For that we at first define a (non-unital) $\ast$-homomorphism $\phi_0: C^*(G_1) \to C^*(G)$ by
$$\phi_0(\bar p) = p, \; \phi_0(\bar e) = e,$$ for $\bar p\in G^0_1, \bar e\in G_1^1$. Lemma \ref{crucial} implies that $\phi_0$ is well-defined. Since there is no edge of $G_2$ entering $G_1$,  $\{p_{\beta}\} \neq \emptyset$.
Since $\sum_{p\in G^0_1} p + \sum_{\beta} p_{\beta} = 1_{C^*(G)}, $ we can define a unital $\ast$-homomorphism $$\phi_1:   C^*(G_1)\oplus \mathbb C \to C^*(G)$$ by
$$\phi_1\left((a, \lambda)\right) = \phi_0(a) + \lambda \sum_{\beta} p_{\beta},$$
for any $a\in C^*(G_1), \lambda \in \mathbb C$.

We define a $\ast$-homomorphism $\phi_2: C^*(G_2) \to C^*(G)$ by
$$\phi_2(\bar{\bar p}) = p, \; \phi_2(\bar {\bar e}) = e,$$
for  $\bar{\bar p}\in G^0_2, \bar{\bar e}\in G_2^1$. Lemma \ref{crucial} implies that $\phi_2$ is well-defined. Our assumption that $G^0_2 = G^0$ implies that $\phi_2$ is unital.
  We have
$$\phi_1\circ\theta_1 (\lambda_1, \ldots, \lambda_{n+1}) = \phi_1\left((\sum_{i} \lambda_i\bar p_i, \lambda_{n+1})\right)=
\sum_i \lambda_i p_i + \lambda_{n+1} \sum_{\beta} p_{\beta},$$
$$ \phi_2\circ\theta_2 (\lambda_1, \ldots, \lambda_{n+1}) = \phi_2\left(\sum_{i=1}^n \lambda_i\bar{\bar p}_i + \lambda_{n+1}\sum_{\beta} \bar{\bar p}_{\beta}\right) = \sum_{i=1}^n \lambda_i p_i + \lambda_{n+1}\sum_{\beta} p_{\beta},  $$ for any $(\lambda_1, \ldots, \lambda_{n+1})\in \mathbb C^{n+1}.$
So $\phi_1\circ\theta_1 = \phi_2\circ\theta_2$ and therefore $\phi_1, \phi_2$ give rise to a unital $\ast$-homomorphism $\phi: \left(C^*(G_1)\oplus \mathbb C\right) \ast_{ \mathbb C^{n+1}} C^*(G_2) \to C^*(G)$ such that $\phi\circ i_i = \phi_i, $ for $i=1, 2.$

Now we define a $\ast$-homomorphism $\psi: C^*(G) \to \left(C^*(G_1)\oplus \mathbb C\right) \ast_{ \mathbb C^{n+1}} C^*(G_2)$ by

$$\psi(p) = \begin{cases} i_1\left((\bar p, 0)\right), p\in G_1 \\ i_2(\bar{\bar p}), p\in G_2, \end{cases}$$
$$\psi(e) = \begin{cases} i_1\left((\bar e, 0)\right), e\in G_1 \\ i_2(\bar{\bar e}), e\in G_2 \end{cases}$$
(we used here the universal property of $C^*(G)$).
Let $\delta_i$ denote the element of $\mathbb C^{n+1}$ whose $i$-th coordinate is one and all the other coordinates are zero.
Since $$i_1\left((\bar p_i, 0)\right) = i_1\circ\theta_1(\delta_i) = i_2\circ\theta_2(\delta_i) =i_2\left(\bar{\bar p}_i\right),$$ $\psi$ is well-defined on the set of generators of $C^*(G)$ and therefore by Lemma \ref{crucial} on the whole $C^*(G)$. Let us check that $\psi$ is unital. Since $G^0_2 = G^0$,
\begin{multline*}\psi(1_{C^*(G)}) = \psi\left(\sum_i p_i + \sum_{\beta}p_{\beta}\right) = i_2\left(\sum_i \bar{\bar p}_i + \sum_{\beta} \bar{\bar p}_{\beta}\right)\\  =i_2(1_{C^*(G_2)}) = 1_{\left(C^*(G_1)\oplus \mathbb C\right) \ast_{ \mathbb C^{n+1}} C^*(G_2)}.\end{multline*}

It is straightforward to check that $\phi\circ \psi = id_{C^*(G)}$. To show that $\psi\circ \phi = id_{\left(C^*(G_1)\oplus \mathbb C\right) \ast_{ \mathbb C^{n+1}} C^*(G_2)}$, it is sufficient to verify this on the generators $i_1(\bar p, 0)$, $i_1(\bar e, 0), i_1((0, 1))$, $i_2(\bar{\bar p})$, $i_2(\bar{\bar e})$ of $\left(C^*(G_1)\oplus \mathbb C\right) \ast_{ \mathbb C^{n+1}} C^*(G_2)$. For all the generators but $i_1(0, 1)$ it is straightforward. For $i_1((0, 1))$ we verify
\begin{multline*}\psi\circ\phi\left(i_1(0, 1)\right) = \psi(\phi_1((0, 1))) = \psi(\sum_{\beta} p_{\beta}) = i_2(\sum_{\beta} \bar{\bar p}_{\beta})\\
= i_2\circ\theta_2(\delta_{n+1}) = i_1\circ\theta_1(\delta_{n+1}) = i_1\left((0, 1)\right). \end{multline*}

\end{proof}

\subsection{Case $G^0_2 \neq G^0$.}

\begin{theorem}\label{case2}  Let  $G = G_1\bigcup G_2$ be a graph with finitely many vertices such that $G^0_2 \neq G^0$ and no edge of $G_2$ enters $G_1$. Let $n$ be the number of vertices shared by $G_1$ and $G_2$. Then $$C^*(G) \cong \left(C^*(G_1)\oplus \mathbb C\right) \ast_{ \mathbb C^{n+2}} \left(C^*(G_2)\oplus \mathbb C\right),$$ where amalgamation  is done via the unital embeddings $\theta_1: \mathbb C^{n+2} \to C^*(G_1)\oplus \mathbb C$ and $\theta_2: \mathbb C^{n+2} \to C^*(G_2)\oplus \mathbb C$ defined by
$$\theta_1(\lambda_1, \ldots, \lambda_{n+2}) = (\sum_{i=1}^n \lambda_i\bar p_i + \lambda_{n+1}\sum_{\alpha} \bar p_{\alpha}, \lambda_{n+2}),$$
$$\theta_2(\lambda_1, \ldots, \lambda_{n+2}) = (\sum_{i=1}^n \lambda_i\bar{\bar p}_i + \lambda_{n+2}\sum_{\beta} \bar{\bar p}_{\beta}, \lambda_{n+1}),$$
$\lambda_1, \ldots, \lambda_{n+2} \in \mathbb C$.
\end{theorem}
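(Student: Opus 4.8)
The plan is to mimic the proof of Theorem \ref{case1} exactly, building mutually inverse unital $*$-homomorphisms $\phi$ and $\psi$ between the two algebras, with the extra summand of $\mathbb C$ now sitting on the $G_2$-side to accommodate the fact that $G^0_2 \neq G^0$, i.e.\ that there are vertices $p_\alpha \in G_1 \setminus G_2$ which $G_2$ never reaches. First I would define a (non-unital) $*$-homomorphism $\phi_0 \colon C^*(G_1) \to C^*(G)$ and $\phi_2' \colon C^*(G_2) \to C^*(G)$ on generators by $\bar p \mapsto p$, $\bar e \mapsto e$ and $\bar{\bar p} \mapsto p$, $\bar{\bar e} \mapsto e$ respectively; Lemma \ref{crucial} guarantees both are well defined, since the CK-relations of $G$ split as those of $G_1$ together with those of $G_2$. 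Since $G^0_2 \neq G^0$ the set $\{p_\alpha\}$ of vertices in $G_1 \setminus G_2$ is non-empty, and since no edge of $G_2$ enters $G_1$ the set $\{p_\beta\}$ of vertices in $G_2 \setminus G_1$ is non-empty too; hence $\sum_{i} p_i + \sum_\alpha p_\alpha = 1_{C^*(G)}$ on the $G_1$-side is \emph{not} an identity but rather the projection onto $G_1^0$, which is the image of $1_{C^*(G_1)}$, and I can extend $\phi_0$ to a unital $*$-homomorphism $\phi_1 \colon C^*(G_1) \oplus \mathbb C \to C^*(G)$ by $(a,\mu) \mapsto \phi_0(a) + \mu \sum_\beta p_\beta$. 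Symmetrically, $\sum_i p_i + \sum_\beta p_\beta = $ the projection onto $G_2^0$, the image of $1_{C^*(G_2)}$, so I extend $\phi_2'$ to a unital $*$-homomorphism $\phi_2 \colon C^*(G_2) \oplus \mathbb C \to C^*(G)$ by $(b,\mu) \mapsto \phi_2'(b) + \mu \sum_\alpha p_\alpha$. The point of the two extra copies of $\mathbb C$ is precisely that $\phi_1$ "sees" the $p_\beta$ only through its $\mathbb C$-summand while $\phi_2$ "sees" the $p_\alpha$ only through its $\mathbb C$-summand.

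Next I would check compatibility: for $(\lambda_1,\dots,\lambda_{n+2}) \in \mathbb C^{n+2}$ one computes
$$\phi_1 \circ \theta_1(\lambda_1,\dots,\lambda_{n+2}) = \sum_{i=1}^n \lambda_i p_i + \lambda_{n+1}\sum_\alpha p_\alpha + \lambda_{n+2}\sum_\beta p_\beta,$$
$$\phi_2 \circ \theta_2(\lambda_1,\dots,\lambda_{n+2}) = \sum_{i=1}^n \lambda_i p_i + \lambda_{n+2}\sum_\beta p_\beta + \lambda_{n+1}\sum_\alpha p_\alpha,$$
so $\phi_1 \circ \theta_1 = \phi_2 \circ \theta_2$, and the universal property of the amalgamated free product yields a unital $*$-homomorphism $\phi \colon (C^*(G_1)\oplus \mathbb C) \ast_{\mathbb C^{n+2}} (C^*(G_2)\oplus \mathbb C) \to C^*(G)$ with $\phi \circ i_1 = \phi_1$, $\phi \circ i_2 = \phi_2$. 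Incidentally this also confirms $\theta_1, \theta_2$ are genuine unital embeddings (injectivity is clear from the formulas, unitality because $\theta_1(1,\dots,1) = (1_{C^*(G_1)},1)$ and $\theta_2(1,\dots,1) = (1_{C^*(G_2)},1)$).

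For the inverse, I would define $\psi \colon C^*(G) \to (C^*(G_1)\oplus \mathbb C) \ast_{\mathbb C^{n+2}} (C^*(G_2)\oplus \mathbb C)$ on generators by
$$\psi(p) = \begin{cases} i_1(\bar p, 0), & p\in G_1^0,\\ i_2(\bar{\bar p},0), & p \in G_2^0,\end{cases}\qquad \psi(e) = \begin{cases} i_1(\bar e, 0), & e \in G_1^1,\\ i_2(\bar{\bar e},0), & e\in G_2^1.\end{cases}$$
Well-definedness on the common vertices $p_i$ follows, as in Theorem \ref{case1}, from $i_1(\bar p_i,0) = i_1\circ\theta_1(\delta_i) = i_2\circ\theta_2(\delta_i) = i_2(\bar{\bar p}_i,0)$, and then by Lemma \ref{crucial} the map respects all CK-relations of $G$, so it extends to $C^*(G)$. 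Unitality of $\psi$ is the one place where the argument genuinely differs from Case 1 and is the main thing to get right: now neither $i_1(1_{C^*(G_1)},0)$ nor $i_2(1_{C^*(G_2)},0)$ alone equals $1$, so I would write
$$\psi(1_{C^*(G)}) = \psi\Big(\sum_i p_i + \sum_\alpha p_\alpha + \sum_\beta p_\beta\Big) = i_1\Big(\sum_i \bar p_i + \sum_\alpha \bar p_\alpha,\,0\Big) + i_2\Big(\sum_\beta \bar{\bar p}_\beta,\,0\Big),$$
then rewrite $i_2(\sum_\beta \bar{\bar p}_\beta, 0) = i_2\circ\theta_2(\delta_{n+2}) = i_1\circ\theta_1(\delta_{n+2}) = i_1(0,1)$, so that the sum becomes $i_1(1_{C^*(G_1)} + 0,\, 0+1) = i_1(1_{C^*(G_1)\oplus \mathbb C}) = 1$. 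Finally I would verify $\phi\circ\psi = \mathrm{id}_{C^*(G)}$ (immediate on generators) and $\psi\circ\phi = \mathrm{id}$ on the generating set $i_1(\bar p,0), i_1(\bar e,0), i_1(0,1), i_2(\bar{\bar p},0), i_2(\bar{\bar e},0), i_2(0,1)$ of the free product; all cases are routine except the two $\mathbb C$-summands, for which $\psi\circ\phi(i_1(0,1)) = \psi(\sum_\beta p_\beta) = i_2(\sum_\beta \bar{\bar p}_\beta,0) = i_2\circ\theta_2(\delta_{n+2}) = i_1\circ\theta_1(\delta_{n+2}) = i_1(0,1)$ and symmetrically $\psi\circ\phi(i_2(0,1)) = \psi(\sum_\alpha p_\alpha) = i_1(\sum_\alpha \bar p_\alpha,0) = i_1\circ\theta_1(\delta_{n+1}) = i_2\circ\theta_2(\delta_{n+1}) = i_2(0,1)$. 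The only real obstacle is bookkeeping: keeping straight which of the two auxiliary units encodes $\sum_\alpha p_\alpha$ versus $\sum_\beta p_\beta$ under $\theta_1$ and $\theta_2$ (note the deliberate "crossing": $\lambda_{n+1}$ multiplies $\bar p_\alpha$ on the $G_1$-side but $\bar p_\beta$ appears with $\lambda_{n+2}$ there, and vice versa on the $G_2$-side), but no new idea beyond Theorem \ref{case1} is needed.
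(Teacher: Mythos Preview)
Your proposal is correct and follows essentially the same approach as the paper's own proof: both construct the unital maps $\phi_1,\phi_2$ by extending the canonical non-unital inclusions with the extra $\mathbb C$-summands picking up $\sum_\beta p_\beta$ and $\sum_\alpha p_\alpha$ respectively, verify $\phi_1\circ\theta_1=\phi_2\circ\theta_2$, define $\psi$ on generators, and check the two compositions on generators with the only nontrivial cases being $i_1(0,1)$ and $i_2(0,1)$ handled via $\delta_{n+2}$ and $\delta_{n+1}$. The organization and computations match almost line for line.
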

\begin{proof}
First we will construct a  unital $\ast$-homomorphism $\phi: \left(C^*(G_1)\oplus \mathbb C\right) \ast_{ \mathbb C^{n+2}} \left(C^*(G_2)\oplus \mathbb C\right) \to C^*(G)$. For that we at first define a (non-unital) $\ast$-homomorphism $\phi_{1,0}: C^*(G_1) \to C^*(G)$ by
$$\phi_{1,0}(\bar p) = p, \; \phi_{1,0}(\bar e) = e,$$ for $\bar p\in G^0_1, \bar e\in G_1^1$. Lemma \ref{crucial} implies that $\phi_{1,0}$ is well-defined.
Since $\sum_{p\in G^0_1} p + \sum_{\beta} p_{\beta} = 1_{C^*(G)}, $ we can define a unital $\ast$-homomorphism $$\phi_1:   C^*(G_1)\oplus \mathbb C \to C^*(G)$$ by
$$\phi_1\left((a, \lambda)\right) = \phi_{1,0}(a) + \lambda \sum_{\beta} p_{\beta},$$
for any $a\in C^*(G_1), \lambda \in \mathbb C$. We note that $\{p_{\beta}\} \neq \emptyset$ because otherwise there would be an edge in $G_2$ entering $G_1$. Therefore $\phi_1$ is well-defined.

We define a $\ast$-homomorphism $\phi_2: C^*(G_2)\oplus\mathbb C \to C^*(G)$ analogously, that is
we define  a $\ast$-homomorphism $\phi_{2,0}: C^*(G_2) \to C^*(G)$ by
$$\phi_{2,0}(\bar{\bar  p}) = p, \; \phi_{2,0}(\bar{\bar  e}) = e,$$ for $\bar{\bar  p}\in G^0_2, \bar{\bar e}\in G_2^1$
and a unital $\ast$-homomorphism $$\phi_2: C^*(G_2)\oplus \mathbb C \to C^*(G)$$ by
$$\phi_2\left((a, \lambda)\right) = \phi_{2,0}(a) + \lambda \sum_{\alpha} p_{\alpha},$$
for any $a\in C^*(G_2), \lambda \in \mathbb C$. We note that since $G^0_2\neq G^0$,  there must be a vertex in $G_1$ that does not belong to $G_2$. So $\{p_{\alpha}\} \neq \emptyset$, and $\phi_2$ is well-defined.

  We have
\begin{multline*}\phi_1\circ\theta_1 (\lambda_1, \ldots, \lambda_{n+2}) = \phi_1\left((\sum_{i} \lambda_i\bar p_i + \lambda_{n+1} \sum_{\alpha} \bar p_{\alpha}, \lambda_{n+2})\right)\\ =
\sum_i \lambda_i p_i + \lambda_{n+1} \sum_{\alpha} p_{\alpha} + \lambda_{n+2} \sum_{\beta} p_{\beta},\end{multline*}
 \begin{multline*}\phi_2\circ\theta_2 (\lambda_1, \ldots, \lambda_{n+2}) = \phi_2\left((\sum_{i=1}^n \lambda_i\bar{\bar p}_i + \lambda_{n+2}\sum_{\beta} \bar{\bar p}_{\beta}, \lambda_{n+1})\right) \\ = \sum_{i=1}^n \lambda_i p_i + \lambda_{n+2}\sum_{\beta} p_{\beta} + \lambda_{n+1}\sum_{\alpha} p_{\alpha},  \end{multline*} for any $(\lambda_1, \ldots, \lambda_{n+2})\in \mathbb C^{n+2}.$
So $\phi_1\circ\theta_1 = \phi_2\circ\theta_2$ and therefore $\phi_1, \phi_2$ define a unital $\ast$-homomorphism $\phi: \left(C^*(G_1)\oplus \mathbb C\right) \ast_{ \mathbb C^{n+2}} \left(C^*(G_2)\oplus \mathbb C\right) \to C^*(G)$ such that $\phi\circ i_i = \phi_i, $ for $i=1, 2.$

Now we define a $\ast$-homomorphism $\psi: C^*(G) \to \left(C^*(G_1)\oplus \mathbb C\right) \ast_{ \mathbb C^{n+2}} \left(C^*(G_2)\oplus \mathbb C\right)$ by

$$\psi(p) = \begin{cases} i_1\left((\bar p, 0)\right), p\in G_1 \\ i_2((\bar{\bar p}, 0)), p\in G_2, \end{cases}$$
$$\psi(e) = \begin{cases} i_1\left((\bar e, 0)\right), e\in G_1 \\ i_2((\bar{\bar e}, 0)), e\in G_2. \end{cases}$$
Let $\delta_i$ denote the element of $\mathbb C^{n+2}$ whose $i$-th coordinate is one and all the other coordinates are zero.
Since $$i_1\left((\bar p_i, 0)\right) = i_1\circ\theta_1(\delta_i) = i_2\circ\theta_2(\delta_i) =i_2\left((\bar{\bar p}_i, 0)\right),$$ $\psi$ is well-defined on the set of generators of $C^*(G)$ and therefore by Lemma \ref{crucial} on the whole $C^*(G)$. Let us check that $\psi$ is unital.

\begin{multline*} \psi(1_{C^*(g)}) = \psi(\sum_i p_i + \sum_{\alpha} p_{\alpha} + \sum_{\beta} p_{\beta}) =
\sum_i i_1((\bar p_i, 0))  + \sum_{\alpha} i_1((\bar p_{\alpha}, 0)) + \sum_{\beta} i_2((\bar{\bar  p}_{\beta}, 0))\\
= \sum_i i_1(\theta_1(\delta_i)) + i_1(\theta_1(\delta_{n+1})) + i_2(\theta_2(\delta_{n+2})) \\ = \sum_i i_1(\theta_1(\delta_i)) + i_1(\theta_1(\delta_{n+1})) + i_1(\theta_1(\delta_{n+2})) = i_1(\theta_1(1_{\mathbb C^{n+2}})) = 1_{\left(C^*(G_1)\oplus \mathbb C\right) \ast_{ \mathbb C^{n+2}} \left(C^*(G_2)\oplus \mathbb C\right)}.
\end{multline*}

It is straightforward to check that $\phi\circ \psi = id_{C^*(G)}$. To show that $\psi\circ \phi = id_{\left(C^*(G_1)\oplus \mathbb C\right) \ast_{ \mathbb C^{n+2}} \left(C^*(G_2)\oplus \mathbb C\right)}$, it is sufficient to verify this on the generators $i_1(\bar p, 0)$, $i_1(\bar e, 0)$, $i_1((0, 1))$, $i_2((\bar{\bar p}, 0))$, $i_2((\bar{\bar e}, 0))$, $i_2((0, 1))$ of $\left(C^*(G_1)\oplus \mathbb C\right) \ast_{ \mathbb C^{n+1}} C^*(G_2)$. For all the generators but $i_1(0, 1)$  and $i_2((0, 1))$ it is straightforward. For $i_1((0, 1))$ we have
\begin{multline*}\psi\circ\phi\left(i_1(0, 1)\right) = \psi(\phi_1((0, 1))) = \psi(\sum_{\beta} p_{\beta}) = i_2((\sum_{\beta} \bar{\bar p}_{\beta}, 0))\\
= i_2\circ\theta_2(\delta_{n+2}) = i_1\circ\theta_1(\delta_{n+2}) = i_1\left((0, 1)\right). \end{multline*}
For  $i_2((0, 1))$ computations are analogous.
\end{proof}

\begin{remark} For graphs with infinitely many vertices, one can obtain a version of Theorem \ref{case1} using non-unital free products and amalgamation over $c_0$ instead of $\mathbb C^{n+1}$. For  Theorem \ref{case2} the assumption that $G$ has  finitely many vertices, or in other words, $C^*(G)$ is unital, seems to be essential. In $C^*(G)$ all the generating projections are orthogonal. When neither of $G_1$ and $G_2$ contains all the vertices, in the amalgamated free product the images of $p_{\alpha}$'s would not be orthogonal to the images of $p_{\beta}$'s unless all the maps are unital which would guarantee that all the projections would sum up to 1 and therefore be orthogonal.
\end{remark}

\section{Graphs with RFD C*-algebras}


 The following lemma is easy. In fact a more general statement -- where the class of RFD C*-algebras will be replaced by a larger class of finite C*-algebras -- will be proved in Lemma \ref{homToFinite}.

\begin{lemma}\label{necessaryCondition} Let $G$ be  a graph. If $C^*(G)$ is RFD, then no edge of $G$ enters a cycle.
\end{lemma}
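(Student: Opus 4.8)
The plan is to prove the contrapositive in the standard way: assume a cycle $\mu = (\mu_n,\ldots,\mu_1)$ in $G$ has an entry, produce an obstruction to the existence of a separating family of finite-dimensional representations. Recall that an entry to the cycle means there is a vertex $v = s(\mu_j) = r(\mu_{j-1})$ on the cycle that receives an edge $f$ which is not one of the $\mu_i$. The key algebraic observation is that the partial isometry $S := s_{\mu_n}\cdots s_{\mu_1}$ associated with going once around the cycle satisfies $S^*S = p_{s(\mu_1)}$ while $SS^* \le p_{s(\mu_1)}$ is a \emph{proper} subprojection of $p_{s(\mu_1)}$, because the CK-relation at the entered vertex $v$ forces $p_v = \sum_{e\in r^{-1}(v)} s_e s_e^*$ to include the extra summand $ff^* \ne 0$, so the chunk of $p_{s(\mu_1)}$ that returns via the cycle is strictly smaller than $p_{s(\mu_1)}$ itself. (One should be a little careful here, tracking the cycle around: the simplest clean statement is that $S$ is a non-unitary isometry on the corner $p_{s(\mu_1)} C^*(G) p_{s(\mu_1)}$, i.e. $S^*S = p_{s(\mu_1)}$ but $SS^* \ne p_{s(\mu_1)}$.)

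The next step is to extract the contradiction from the existence of a single finite-dimensional representation that is nonzero on $p_{s(\mu_1)}$. Suppose $\pi: C^*(G)\to M_k$ is such a representation, so $\pi(p_{s(\mu_1)}) \ne 0$. Then $T := \pi(S)$ is an operator on the finite-dimensional space $\pi(p_{s(\mu_1)})\mathbb{C}^k$ with $T^*T = \pi(p_{s(\mu_1)})$ (the identity on that space) but $TT^* \ne \pi(p_{s(\mu_1)})$; that is, $T$ is an isometry on a finite-dimensional space that is not onto — which is impossible. Hence \emph{every} finite-dimensional representation of $C^*(G)$ kills $p_{s(\mu_1)}$. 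But then the family of finite-dimensional representations cannot be separating, since $p_{s(\mu_1)} \ne 0$ in $C^*(G)$ (the canonical generators of a graph C*-algebra are all nonzero — e.g. by the gauge-invariant uniqueness theorem, or because one can build an explicit faithful representation on a graph Hilbert space where $p_{s(\mu_1)}$ acts nontrivially). Therefore $C^*(G)$ is not RFD.

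The main obstacle I expect is making precise the two algebraic facts just used, and in the right generality. First, showing $SS^* \ne p_{s(\mu_1)}$ rigorously: one needs to know that $ff^*$, the extra summand coming from the entry $f$, is genuinely nonzero and genuinely "uses up" part of $p_{s(\mu_1)}$ after transporting around the cycle. The clean way is to compute $p_{s(\mu_1)} - SS^*$ and exhibit it as dominating a conjugate of $ff^*$ (or of $p_v - \mu_j\mu_j^*$, whichever vertex is entered), then invoke that the canonical partial isometries and projections in $C^*(G)$ are all nonzero. Second, the non-emptiness / nonvanishing facts ($p_{s(\mu_1)}\neq 0$, $ff^*\neq 0$) are exactly the kind of statement that one proves either by citing the uniqueness theorems for graph C*-algebras or by writing down a concrete representation; I would cite the relevant result from \cite{Raeburn} or \cite{Tomforde} rather than reprove it. Once these are in place, the "isometry on a finite-dimensional space must be a unitary" argument finishes everything in one line.
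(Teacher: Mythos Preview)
Your overall strategy is right, but there is a genuine logical slip in the contradiction step. From $S^*S = p_{s(\mu_1)}$ and $SS^* \lneq p_{s(\mu_1)}$ in $C^*(G)$ you cannot infer $\pi(SS^*) \ne \pi(p_{s(\mu_1)})$ for a given finite-dimensional $\pi$: a $\ast$-homomorphism need not preserve strict inequalities between projections. In fact the rank argument forces the \emph{opposite} conclusion, $\pi(SS^*) = \pi(p_{s(\mu_1)})$, for every finite-dimensional $\pi$, so your claim that ``every finite-dimensional representation of $C^*(G)$ kills $p_{s(\mu_1)}$'' is false in general. Concretely, take $G$ with vertices $v,w$, an edge $e\colon v\to w$, and a loop $\ell$ at $w$; the loop is a cycle with entry $e$, yet $\pi(p_v)=0$, $\pi(p_w)=I_k$, $\pi(s_e)=0$, $\pi(s_\ell)=U$ (any $k\times k$ unitary) is a finite-dimensional representation with $\pi(p_{s(\mu_1)})=\pi(p_w)\ne 0$.

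The repair is immediate once you reorganize: what the rank argument actually shows is that every finite-dimensional $\pi$ annihilates the \emph{nonzero} element $p_{s(\mu_1)} - SS^*$ (nonzero since, with $\alpha = \mu_n\cdots\mu_j$, it dominates $s_\alpha s_f s_f^* s_\alpha^* \ne 0$, exactly as you outlined), and that already contradicts RFD. The paper packages the same obstruction more cleanly via a faithful trace $\tau$ on $C^*(G)$ (available for any RFD $C^*$-algebra): summing the CK relations over all vertices of the cycle gives $\sum_i e_i^*e_i = \sum_i e_i e_i^* + \sum_{f} ff^*$, and applying $\tau$ yields $\tau(\sum_f ff^*)=0$, hence every entry $f$ satisfies $ff^*=0$. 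This telescoping avoids choosing a base point, avoids tracking which element is killed, and needs no appeal to uniqueness theorems.
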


\begin{lemma}\label{disjointCycles} Let $G$ be  a graph. If no edge of $G$ enters a cycle, then all cycles in $G$ are disjoint.
\end{lemma}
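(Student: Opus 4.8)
The plan is to argue by contradiction: suppose two \emph{distinct} cycles $\mu$ and $\nu$ of $G$ share a vertex, and deduce that in fact $\mu=\nu$ (as cycles, i.e.\ up to cyclic reindexing). The hypothesis that no edge enters a cycle enters through one observation: if $w$ is a vertex lying on a cycle $\mu$, then every edge with range $w$ is an edge of $\mu$; since $\mu$ meets $w$ exactly once, this forces $w$ to receive exactly one edge of $G$, namely the edge of $\mu$ whose range is $w$. So every vertex on a cycle has a unique incoming edge, and that edge lies on the cycle.

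Concretely, I would proceed as follows. Let $\mu,\nu$ be cycles and let $v$ be a common vertex. Write $\mu$ as a cyclic sequence of vertices $v=w_0,w_1,\dots,w_{N-1},w_N=v$, with edge $f_i$ going from $w_{i-1}$ to $w_i$; after a cyclic rotation of $\nu$ we may likewise take its base vertex to be $v$. Now trace the graph backwards from $v$: by the observation, the unique edge with range $v=w_0$ is $f_N$, and its source $w_{N-1}$ again lies on $\mu$; the unique edge with range $w_{N-1}$ is $f_{N-1}$; continuing, after $N$ steps the walk returns to $v$, having listed exactly $f_N,f_{N-1},\dots,f_1$, i.e.\ all of $\mu$. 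But every vertex encountered in this walk also lies on $\nu$, and the walk is \emph{forced} at each step (unique incoming edge), so the same walk, read from $\nu$'s side, lists all edges of $\nu$ in reverse cyclic order. Hence $\mu$ and $\nu$ have the same length and consist of the same edges in the same cyclic order, so $\mu=\nu$, a contradiction. Therefore distinct cycles of $G$ share no vertex.

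I do not expect a genuine obstacle here: the argument is purely combinatorial. The only points requiring a little care are (i) checking by induction that every vertex met in the backward walk still lies on $\mu$, so that the uniqueness of the incoming edge keeps applying — this is immediate, since the source of the incoming edge of a cycle vertex is the preceding cycle vertex; and (ii) being explicit that a cycle visits each of its vertices only once, so that ``the'' incoming edge is well defined and the backward walk closes up after exactly $N=|\mu|$ steps. With these in hand the contradiction with $\mu\neq\nu$ is immediate, which proves the lemma.
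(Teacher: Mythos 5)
Your proof is correct and is essentially the argument in the paper: starting from a shared vertex, trace backwards, using at each step that an edge entering a cycle vertex must belong to the cycle, so the forced backward walk exhausts both cycles and shows they coincide. The only cosmetic difference is that you package the hypothesis as ``each cycle vertex has a unique incoming edge'' (which tacitly uses that a cycle visits each vertex once), whereas the paper simply notes at each step that the edge of the second cycle entering the current vertex must be an edge of the first cycle.
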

\begin{proof} Let $\mu_1, \mu_2$ be two cycles and suppose they are not disjoint. Let $p$ be a common vertex of them. There is an edge $e_1^{(2)}\in \mu_2$ with range in $p$. Since no edge of $G$ enters a cycle, $e_1^{(2)}\in \mu_1$. Let $p_1$ be the source of $e_1^{(2)}$.  There is an edge $e_2^{(2)}\in \mu_2$ with range in $p_1$. Therefore $e_2^{(2)}\in \mu_1$. Continuing this process, we obtain that all edges of $\mu_2$ are in $\mu_1$.
\end{proof}

\begin{lemma}\label{RFDfinManyIdeals} If an RFD C*-algebra has finitely many ideals, then it is finite-dimensional. 
\end{lemma}
\begin{proof} Let $A$ be such a C*-algebra. Since it is RFD, there is a separating family $\{\pi_n, n\in \mathbb N\}$ of irreducible finite-dimensional representations of $A$. Since $A$ has finitely many ideals, the set $\{ker\;\pi_n, n\in \mathbb N\}$ is finite. Since two finite-dimensional irreducible representations have the same kernel if and only of they are unitarily equivalent, there are finitely many $\pi_{i_1}, \ldots, \pi_{i_N}$ that separate points of $A$. Then $A$ is isomorphic to the finite-dimensional C*-algebra $(\oplus_{k=1}^N \pi_{i_k})(A)$. 
\end{proof}

\begin{theorem}\label{RFD} Let $G$ be a graph with finitely many vertices. Then $C^*(G)$ is RFD if and only if $G$ is finite and no cycle has an entry.
\end{theorem}
\begin{proof}
First, assume that no cycle in $G$ has an entry. By Lemma \ref{disjointCycles} all cycles in $G$ are disjoint. Let $C= \{\mu_1, \ldots, \mu_L\}$ be the set of all cycles in $G$. Let
$$G_1 = \bigcup_{\mu\in C} \mu, $$ and let $G_2$ be the rest of the graph, that is, it consists of edges  that are not in $G_1$ and their vertices.  Then $G_2$ is necessarily a "forest", that is, it has no cycles. The condition that no cycle has an entry implies that no edge of $G_2$ enters $G_1$.
By Theorem \ref{case1} in the case when $G^0_2 = G^0$ (and by Theorem \ref{case2} in the case $G^0_2 \neq G^0$, respectively), we obtain

\bigskip

 {\it $C^*(G)$ is an amalgamated free product of $C^*(G_1)\oplus \mathbb C$ and $C^*(G_2)$ ($C^*(G_2)\oplus \mathbb C$, respectively), where the amalgamation is done over a finite-dimensional C*-subalgebra.}

\bigskip

Now suppose $C^*(G)$ is RFD. By Lemma \ref{necessaryCondition} no  cycle has an entry. Then $C^*(G)$ is amalgamated free product as above. Then $C^*(G_2)$, being a C*-subalgebra of $C^*(G)$, is RFD. Since $G_2$ is a forest, $C^*(G_2)$ is an AF-algebra (\cite[Th. 2.7 + Cor. 5.3]{Raeburn} or \cite[Rem. 2.4.14]{Tomforde}), and by \cite{Eilers} it has finitely many ideals. By Lemma \ref{RFDfinManyIdeals} $C^*(G_2)$ is  finite-dimensional. Therefore $G_2$ is finite.   Since cycles in $G$ are disjoint by Lemma \ref{disjointCycles}, and there are finitely many vertices in $G$, $G_1$ is also a finite graph. So $G$ is finite.

Now we assume that $G$ is finite and no cycle has an entry. We want to prove that $C^*(G)$ is RFD.
We are going to construct an embedding $\pi_1: C^*(G_1)\oplus \mathbb C$ into product of some matrix algebras and embedding $\pi_2$ of $C^*(G)$ ($C^*(G_2)\oplus \mathbb C$, respectively) into product of the same matrix algebras such that $\pi_1\circ \theta_1 = \pi_2\circ\theta_2$, where $\theta_1, \theta_2$ are as in Theorem \ref{case1} (Theorem \ref{case2}, respectively).  Then Theorem \ref{LiShen} will finish the proof.

\medskip

Our constructions will be similar in both cases, so we carry the two
cases along, underlying the differences when needed.
 Let $I_{\mu}$ be the number of vertices shared by the cycle $\mu$ and $G_2$. Let $I = \sum_{\mu\in C}I_{\mu}$ be the number of common vertices of $G_1$ and $G_2$.  For a source $t\in G_2$ let $n(t) = \sharp\{\nu\in G_2^*\;|\; s(\nu) = t\}.$ Let $$k = \sum_{t: t \; \text{is a  source in}\; G} n(t). $$
 It follows from Theorem \ref{tree} that there is an embedding
  $$\pi: C^*(G_2) \to M_k\cong B(\mathbb C^{k})$$ such that $\pi(t)$ is a rank 1 projection, for each source $t$ in $G_2$. Note that since no edge of $G_2$ enters $G_1$, the common vertices of $G_1$ and $G_2$ have to be sources of $G_2$.  Therefore $\pi$ sends them to rank 1 projections and WLOG we can assume that they are sent to the projections on the last $I$ basis vectors so that the $I_1$ common vertices of $\mu_1$ and $G_2$  go to the projections on the first $I_1$ of those $I$ basis vectors, the $I_2$ common vertices of $\mu_2$ and $G_2$  go to the projections on the next $I_2$ of those $I$ basis vectors, and so on.

 We have $C^*(G_1) \cong \oplus_{\mu} C^*(\mu).$ For a cycle $\mu$, let $N_{\mu}$ be the number of vertices in $\mu$.
  For a cycle $\mu$ and $z\in \mathbb T$, let $$\rho_{\mu, z}: C^*(\mu) \to M_{N_{\mu}}\cong B(\mathbb C^{N_{\mu}})$$ be as in Corollary \ref{cycle}. WLOG we can assume that the $I_{\mu}$  common vertices of $\mu$ and $G_2$ are sent to the projections on the  first $I_{\mu}$  basis vectors of $\mathbb C^{N_{\mu}}$.

 Now we are going to construct representations $\pi_{1, z}$ of $C^*(G_1) \oplus \mathbb C$ and  $\tilde \pi_{2}$ of $C^*(G_2)$ ($C^*(G_2)\oplus \mathbb C$, in the second case respectively)  on the space of dimension  $(k+\sum_{\mu}N_{\mu} - I)$. (We note that in the first case $k+\sum_{\mu}N_{\mu} - I = k$).
 We will view $M_k$ and $ M_{\sum_{\mu}N_{\mu}}$ as C*-subalgebras of $M_{k+\sum_{\mu}N_{\mu} - I}$ via the embeddings $$M_k\hookrightarrow M_{k+\sum_{\mu}N_{\mu} - I}, \; \;\; T\mapsto \left(\begin{array}{cc} T & \\ & 0_{\sum N_{\mu} - I}\end{array}\right),$$ and
  $$M_{\sum_{\mu}N_{\mu}}\hookrightarrow M_{k+\sum_{\mu}N_{\mu} - I}, \; \; \; T \mapsto \left(\begin{array}{cc} 0_{k-I} & \\& T\end{array}\right).$$ For each $\mu_l$ we embed
  $$M_{N_{\mu_l}}\cong B(\mathbb C^{I_{\mu_l}} \oplus \mathbb C^{N_{\mu_l} - I_{\mu_l}})\hookrightarrow
 M_{\sum_{\mu}N_{\mu}}\cong B(\mathbb C^{I_{\mu_1}} \oplus \ldots \oplus \mathbb C^{I_{\mu_L}} \oplus \mathbb C^{N_{\mu_1} - I_{\mu_1}}\oplus \ldots \oplus \mathbb C^{N_{\mu_L} - I_{\mu_L}})$$ via the canonical embedding
 $$\mathbb C^{I_{\mu_l}} \oplus \mathbb C^{N_{\mu_l} - I_{\mu_l}}\hookrightarrow
 \mathbb C^{I_{\mu_1}} \oplus \ldots \oplus \mathbb C^{I_{\mu_L}} \oplus \mathbb C^{N_{\mu_1} - I_{\mu_1}}\oplus \ldots \oplus \mathbb C^{N_{\mu_L} - I_{\mu_L}}.$$  These embeddings are summarized on Figure \ref{embeddings}. 

\begin{figure}
 \begin{center}
\begin{tikzpicture}[scale=1]

\def\k{3}           
\def\I{2}         
\def\N{4}         
\def\B{\N}          
\def\S{5}         

\coordinate (A1) at (0,\S);
\coordinate (A2) at (\k,\S-\k);
\draw[thick] (A1) rectangle (A2);

\coordinate (B1) at (\k-\I,\S-\k-\B+\I);
\coordinate (B2) at (\k-\I+\B,\S-\k-\B+\I+\B);
\draw[thick] (B1) rectangle (B2);

\draw[dashed] (\k,\S) -- (\S,\S);
\draw[dashed] (\S,\S) -- (\S,\N);

\draw[dashed] (0,\S-\k) -- (0,0);
\draw[dashed] (0,0) -- (\S-\N,0);


\draw[decorate,decoration={brace, amplitude=5pt}, thick]
(0,\S+0.2) -- (\k,\S+0.2)
node[midway, above=6pt] {\footnotesize $k$};

\draw[decorate,decoration={brace, amplitude=5pt}, thick]
(\k-\I,\S-\k+0.2) -- (\k,\S-\k+0.2)
node[midway, above=6pt] {\footnotesize $I$};

\draw[decorate,decoration={brace, amplitude=5pt}, thick]
(\S-\N,+0.2) -- (\S,+0.2)
node[midway, above=6pt] {\footnotesize $\sum N_i $};

\end{tikzpicture}
\caption{Embeddings}
\label{embeddings}
\end{center}
\end{figure}

 The constructions of $\pi$ and $\rho_{\mu, z}$ imply that

 \begin{equation}\label{RFD1} \rho_{\mu, z}(\bar p_i) = \pi(\bar{\bar p}_i), \;\text{when}\; p_i \;\text {is a common vertex of } \; \mu \;\text{and}\; G_2,\end{equation}

 \begin{equation}\label{RFD2} \rho_{\mu, z}\left(\sum_{\alpha: p_{\alpha} \in \mu} \bar p_{\alpha}\right) = \mathbb{1}_{N_{\mu}- I_{\mu}}, \end{equation}

 \begin{equation}\label{RFD3} \pi\left(\sum_{\beta} \bar{\bar p}_{\beta}\right) = \mathbb {1}_{k-I}.
 \end{equation}

 Now we define  a representation $\pi_{1, z}: C^*(G_1)\oplus \mathbb C \to M_{k+\sum_{\mu}N_{\mu} - I}$ by

 $$\pi_{1, z}\left((\oplus_{\mu} a_{\mu}, \lambda)\right) = \left(\oplus_{\mu} \rho_{\mu, z}(a_{\mu})\right) \oplus \lambda\mathbb{1}_{k-I},$$

 \noindent for any $a_{\mu} \in C^*(\mu), \mu \in C, \lambda\in \mathbb C$.
 We define a representation  $$\tilde\pi_{2}: C^*(G_2) \to M_{k+\sum_{\mu}N_{\mu} - I},$$ $$(\tilde \pi_{2}: C^*(G_2)\oplus \mathbb C \to M_{k+\sum_{\mu}N_{\mu} - I}, \;\text{respectively})$$ by
  $$\tilde\pi_2(a) = \pi(a)$$
  $$(\tilde\pi_2((a, \lambda)) = \pi(a) \oplus \lambda \mathbb{1}_{\sum_{\mu} N_{\mu} - I}, \;\text{respectively}),$$
 for any $a\in C^*(G_2)$ (and $\lambda\in \mathbb C$, respectively). Let us show that $\pi_{1, z}\circ \theta_1 = \tilde \pi_2\circ \theta_2$.
 In the case $G^0_2 = G^0$
 \begin{multline*} \pi_{1, z} \circ \theta_1\left((\lambda_1, \ldots, \lambda_{n+1})\right)) = \pi_{1, z} \left((\sum_i\lambda_i \bar  p_i, \lambda_{n+1})\right)\\
 = \left(\oplus_{\mu} \rho_{\mu, z}(\sum_{i: p_i\in \mu} \lambda_i\bar p_i)\right) \oplus \lambda_{n+1}\mathbb{1}_{k-I},
 \end{multline*}
 \begin{multline*} \tilde \pi_2\circ \theta_2 \left((\lambda_1, \ldots, \lambda_{n+1})\right) = \tilde \pi_2\left(\sum_i \lambda_i \bar{\bar p}_i + \lambda_{n+1} \sum_{\beta} \bar{\bar p}_{\beta}\right) \\= \pi\left(\sum_i \lambda_i \bar{\bar p}_i + \lambda_{n+1}\sum_{\beta}\bar{\bar p}_{\beta}\right)
 \stackrel{(\ref{RFD3})}{=} \sum_{\mu} \sum_{i: p_i\in \mu} \lambda_i \pi(\bar{\bar p}_i) \oplus \lambda_{n+1}\mathbb{1}_{k-I}.\nonumberthis\end{multline*}
 Using (\ref{RFD1}) we conclude that   $\pi_{1, z}\circ \theta_1 = \tilde \pi_2\circ \theta_2$ in this case.

 \noindent In the case $G^0_2 \neq G^0$

 \begin{multline*} \pi_{1, z}\circ\theta_1\left((\lambda_1, \ldots, \lambda_{n+2})\right) = \pi_{1,z}\left((\sum_i \lambda_i\bar p_i + \lambda_{n+1}\sum_{\alpha} \bar p_{\alpha}, \lambda_{n+2})\right) \\ = \pi_{1,z}\left((\sum_{\mu}\sum_{i: p_i\in \mu} \lambda_i\bar p_i + \lambda_{n+1}\sum_{\mu}\sum_{\alpha: p_{\alpha}\in \mu} \bar p_{\alpha}, \lambda_{n+2})\right)\\ \stackrel{(\ref{RFD2})}{=}
 \sum_{\mu}\sum_{i: p_i\in \mu} \lambda_i\rho_{\mu, z}(\bar p_i) \oplus \lambda_{n+1} \sum_{\mu} \mathbb{1}_{N_{\mu}-I_{\mu}} \oplus \lambda_{n+2}\mathbb{1}_{k-I}\\= \sum_{\mu}\sum_{i: p_i\in \mu} \lambda_i\rho_{\mu, z}(\bar p_i) \oplus \lambda_{n+1} \mathbb{1}_{\sum_{\mu}N_{\mu} - I} \oplus\lambda_{n+2}\mathbb{1}_{k-I}, \nonumberthis
 \end{multline*}
 \begin{multline*} \tilde\pi_2\circ \theta_2\left((\lambda_1, \ldots, \lambda_{n+2})\right) = \tilde \pi_2\left((\sum_i \lambda_i \bar{\bar p}_i +
 \lambda_{n+2}\sum_{\beta}\bar{\bar p}_{\beta}, \lambda_{n+1})\right)\\
 = \sum_i \lambda_i \pi(\bar{\bar p}_i)\oplus \lambda_{n+2}\sum_{\beta}\pi(\bar{\bar p}_{\beta})\oplus \lambda_{n+1}\mathbb{1}_{\sum_{\mu}N_{\mu}-I}
 \\ \stackrel{(\ref{RFD3})}{=} \sum_{\mu}\sum_{i: p_i\in \mu} \lambda_i \pi(\bar{\bar p}_i) \oplus \lambda_{n+2}\mathbb{1}_{k-I} \oplus \lambda_{n+1}\mathbb{1}_{\sum_{\mu} N_{\mu} - I}.\nonumberthis
 \end{multline*}
 By (\ref{RFD1}), we see that   $\pi_{1, z}\circ \theta_1 = \tilde \pi_2\circ \theta_2$ in this case either. So, for each $z\in \mathbb T$
 \begin{equation}\label{RFD4} \pi_{1, z}\circ \theta_1 = \tilde \pi_2\circ \theta_2. \end{equation}

 Let $\mathcal F$ be a dense subset of $\mathbb T$.  Let

 $$\pi_1 =  \bigoplus_{z\in \mathcal F} \pi_{1, z},$$

 $$\pi_2 = \bigoplus_{z\in \mathcal F} \tilde \pi_2.$$

 Then $\pi_1, \pi_2$ are unital $\ast$-homomorphisms, and (\ref{RFD4}) implies that
 $$\pi_1\circ\theta_1 = \pi_2\circ \theta_2.$$
  For each cycle $\mu$,  $\bigoplus_{z\in \mathcal F} \rho_{\mu, z}$ is injective by Corollary \ref{cycle}. Therefore so is $\pi_1$.
  Since by Theorem \ref{tree} $\pi$ is is injective, so is $\tilde \pi_2$ and therefore so is  $\pi_2$.
  By Theorem \ref{LiShen}, $C^*(G)$ is RFD.
\end{proof}

\section{Operator norm stability of unital graph C*-algebras}

Here we consider only unital graph C*-algebras, that is C*-algebras of graphs that have finitely many vertices and possibly infinitely many edges.

From now on we always will denote vertices of graphs by the letters $p, q$ and edges of graphs by the letters $e, f$. By this reason we do not have to write  $p\in G^0$ and $e\in G^1$ anymore, instead will write $p, f\in G$.

\medskip

We say that {\bf a path $\nu $ leads to a cycle $c$ } if $\nu$ ends at a vertex of $c$ and no edge of $\nu$ belongs to $c$.

\medskip

Let $G' $ be the subgraph of $G$ that consists of all paths that lead to cycles. Precisely
$$ e\in G'^{(1)} \;\text{if}\; \exists \; \nu\in G^* \;\text{such that}\;  e\in \nu, \;\nu \;\text{leads to a cycle}.$$

\medskip

\tikzset{every loop/.style={min distance=10mm,in=120,out=240,looseness=40}}
\begin{figure}
\centering
 \begin{tikzpicture}[node distance={15mm}, thick, main/.style = {draw, fill, circle, inner sep=1.5pt},mainLarge/.style = {draw, circle, minimum size=30mm}]

 \def\Unit{1.15}

\node[main, label=below:$ $,red] (P1) at (0.5*\Unit,0*\Unit){};
\node[main, label=below:$ $,red] (P2) at (1.5*\Unit,-1*\Unit){};
\node[main, label=below:$ $,black] (P3) at (2.5*\Unit,0*\Unit){};
\node[main, label=below:$ $,black] (P4) at (2*\Unit,1*\Unit){};
\node[main, label=below:$ $,black] (P5) at (1*\Unit,1*\Unit){};
\node[main, label=below:$ $,red] (P6) at (3*\Unit,0*\Unit){};
\node[main, label=below:$ $,black] (P7) at (4*\Unit,0*\Unit){};
\node[main, label=below:$ $,black] (P7b) at (5*\Unit,0*\Unit){};
\node[main, label=below:$ $,red] (P8) at (3*\Unit,1*\Unit){};
\node[main, label=below:$ $,red] (P9) at (1*\Unit,2*\Unit){};
\node[main, label=below:$ $,red] (P10) at (2*\Unit,2*\Unit){};
\node[main, label=below:$ $,red] (P11) at (3*\Unit,2*\Unit){};
\node[main, label=below:$ $,black] (P12) at (4*\Unit,2*\Unit){};
\node[main, label=below:$ $,black] (P13) at (5*\Unit,2*\Unit){};
\node[main, label=below:$ $,black] (P14) at (6*\Unit,2*\Unit){};
\node[main, label=below:$ $,black] (P15) at (7*\Unit,2*\Unit){};
\node[main, label=below:$ $,black] (P16) at (2*\Unit,3*\Unit){};
\node[main, label=below:$ $,red] (P17) at (3*\Unit,3*\Unit){};
\node[main, label=below:$ $,black] (P18) at (4*\Unit,3*\Unit){};
\node[main, label=below:$ $,black] (P19) at (5*\Unit,3*\Unit){};
\node[main, label=below:$ $,black] (P20) at (6*\Unit,3*\Unit){};
\node[main, label=below:$ $,red] (P21) at (3*\Unit,4*\Unit){};

\path[-angle 90,font=\scriptsize]

(P1) edge ["",red]   (P2)
(P1) edge ["",bend left=20,red]   (P2)
(P2) edge ["",black]   (P3)
(P3) edge ["",black]   (P4)
(P4) edge ["",black]   (P5)
(P5) edge ["",black]   (P1)

(P8) edge ["",red]   (P4)
(P8) edge ["",red]   (P6)

(P6) edge ["",black]   (P7)
(P6) edge [loop left,out=225, in=315,  looseness=20,"",black]   (P6)
(P7b) edge ["",black]   (P7)
(P7) edge ["",black]   (P12)

(P9) edge[out=135, in=225, looseness=20, red] (P9)  
(P9) edge[out=45,  in=135,  looseness=20, red] (P9)  
(P9) edge[out=225, in=315, looseness=20, red] (P9)  


(P10) edge ["",red]   (P9)
(P10) edge ["",black]   (P16)

(P11) edge ["",red]   (P10)
(P11) edge ["",black]   (P12)
(P11) edge ["",red,double, double distance=2pt,-{Stealth[length=10pt]}]   (P8)

(P12) edge ["",black,double, double distance=2pt,-{Stealth[length=10pt]}]   (P13)

(P13) edge ["",black]   (P14)

(P14) edge ["",black]   (P15)

(P17) edge ["",red]   (P11)
(P17) edge ["",black]   (P18)
(P17) edge [loop left, looseness=20,out=135, in=225,"",red]   (P17)

(P18) edge ["",black,double, double distance=2pt,-{Stealth[length=10pt]}]   (P19)

(P19) edge ["",black]   (P20)

(P21) edge ["",red]   (P17)
;

\end{tikzpicture}

   \caption{Example of $G$ and $G'$.}
\label{fig1}
\end{figure}

{\bf Example } Figure \ref{fig1} contains an example of a graph. $G'$  is coloured in red. By $\Rightarrow$ we draw "edges of infinite multiplicity", that is infinitely many edges with the same source and the same range.

\medskip
\newpage

Let $S\subset G$ and $e\in G^1$. We will say that $e$ {\bf can be reached from} $S$ if there is a path $\nu = (e, \nu_n, \ldots, \nu_1)$ with $s(\nu)\in S$.
(Here we allow the case $\nu=e$).

\medskip

We define a subgraph $\tilde G$ of $G$. First, we specify what are edges of $\tilde G$.

\medskip

An edge $e$ belongs to $\tilde G$ if one of the following four conditions holds : 

\begin{figure}[htbp]
    \centering
    \begin{minipage}{0.55\textwidth}
    (1) $e$ can be reached from a vertex of   a cycle  all of whose edges are not in $G'$.

\vspace{1cm}
    On the picture on the right, an example of $e$ satisfying (1) is shown in blue. $G'$ is shown in red.
    \end{minipage}
    \hfill
    \begin{minipage}{0.35\textwidth}
        \centering
        \begin{tikzpicture}[node distance={15mm}, thick, main/.style = {draw, fill, circle, inner sep=1.5pt},mainLarge/.style = {draw, circle, minimum size=30mm}]

 \def\Unit{1}

\node[main, label=below:$ $,red] (P1) at (0*\Unit,0*\Unit){};
\node[main, label=below:$ $,red] (P2) at (0*\Unit,1*\Unit){};
\node[main, label=below:$ $,red] (P3) at (0*\Unit,2*\Unit){};
\node[main, label=below:$ $,red] (P4) at (0*\Unit,3*\Unit){};
\node[main, label=below:$ $,blue] (P5) at (1*\Unit,0.5*\Unit){};
\node[main, label=below:$ $,blue] (P6) at (2*\Unit,1*\Unit){};

\path[-angle 90,font=\scriptsize]
(P1) edge [loop left,"",black]   (P1)

(P1) edge ["",black]   (P5)
(P5) edge ["e",blue]   (P6)

(P4) edge ["",red]   (P3)
(P3) edge ["",red]   (P2)
(P2) edge ["",red]   (P1)
;

\end{tikzpicture}
\label{fig1a}
    \end{minipage}
\end{figure}


\begin{figure}[htbp]
    \begin{minipage}{0.55\textwidth}
    (2) $e$ cannot be reached  from  $G'$.

    \vspace{1cm}
    On the picture on the right, an example of $e$ satisfying (2) is shown in blue. $G'$ is shown in red.
    \end{minipage}
    \hfill
    \begin{minipage}{0.35\textwidth}
        \centering
        \begin{tikzpicture}[node distance={15mm}, thick, main/.style = {draw, fill, circle, inner sep=1.5pt},mainLarge/.style = {draw, circle, minimum size=30mm}]

 \def\Unit{1}

\node[main, label=below:$ $,red] (P1) at (0*\Unit,0*\Unit){};
\node[main, label=below:$ $,red] (P2) at (0*\Unit,1*\Unit){};
\node[main, label=below:$ $,red] (P3) at (0*\Unit,2*\Unit){};

\node[main, label=below:$ $,blue] (P4) at (1*\Unit,0.5*\Unit){};
\node[main, label=below:$ $,blue] (P5) at (2*\Unit,0*\Unit){};

\path[-angle 90,font=\scriptsize]
(P1) edge [loop left,"",black]   (P1)

(P1) edge ["",black]   (P4)
(P5) edge ["e",blue]   (P4)

(P3) edge ["",red]   (P2)
(P2) edge ["",red]   (P1)
;

\end{tikzpicture}
\label{fig2a}
    \end{minipage}
    \end{figure}
\begin{figure}[htbp]
    \begin{minipage}{0.55\textwidth}
    (3) $e$ can be reached from a vertex of an edge as in 2).

    \vspace{1cm}
    On the picture on the right, an example of $e$ satisfying (3) is shown in blue. $G'$ is shown in red.
    \end{minipage}
    \hfill
    \begin{minipage}{0.35\textwidth}
        \centering
        \begin{tikzpicture}[node distance={15mm}, thick, main/.style = {draw, fill, circle, inner sep=1.5pt},mainLarge/.style = {draw, circle, minimum size=30mm}]

 \def\Unit{1}

\node[main, label=below:$ $,red] (P1) at (0*\Unit,0*\Unit){};
\node[main, label=below:$ $,red] (P2) at (0*\Unit,1*\Unit){};
\node[main, label=below:$ $,red] (P3) at (0*\Unit,2*\Unit){};

\node[main, label=below:$ $,black] (P4) at (1*\Unit,0.5*\Unit){};
\node[main, label=below:$ $,black] (P5) at (2*\Unit,0*\Unit){};

\node[main, label=below:$ $,black] (P6) at (1*\Unit,1*\Unit){};
\node[main, label=below:$ $,blue] (P7) at (2*\Unit,1*\Unit){};
\node[main, label=below:$ $,blue] (P8) at (3*\Unit,1*\Unit){};

\path[-angle 90,font=\scriptsize]
(P1) edge [loop left,"",black]   (P1)

(P1) edge ["",black]   (P4)
(P5) edge ["",black]   (P4)
(P5) edge ["",black]   (P7)

(P3) edge ["",red]   (P2)
(P2) edge ["",red]   (P1)

(P2) edge ["",black]   (P6)
(P6) edge ["",black]   (P7)
(P7) edge ["e",blue]   (P8)
;

\end{tikzpicture}
\label{fig3a}
    \end{minipage}\vspace{1cm} 
    \\
    \begin{minipage}{0.55\textwidth}
    (4) there are vertices $p, q$ and infinitely many edges $f_i$,  such that $s(f_i)=p$, $r(f_i)=q$, $f_i\notin G'$, and $e$ can be reached from $q$

    \vspace{1cm}
    On the picture on the right, an example of $e$ satisfying (4) is shown in blue. $G'$ is shown in red.
    \end{minipage}
    \hfill
    \begin{minipage}{0.35\textwidth}
        \centering
         \begin{tikzpicture}[node distance={15mm}, thick, main/.style = {draw, fill, circle, inner sep=1.5pt},mainLarge/.style = {draw, circle, minimum size=30mm}]

 \def\Unit{1}

\node[main, label=below:$ $,red] (P1) at (0*\Unit,0*\Unit){};
\node[main, label=below:$ $,red] (P2) at (0*\Unit,1*\Unit){};
\node[main, label=below:$ $,red] (P3) at (0*\Unit,2*\Unit){};
\node[main, label=below:$ $,blue] (P4) at (1*\Unit,1*\Unit){};
\node[main, label=below:$ $,blue] (P5) at (2*\Unit,1*\Unit){};

\path[-angle 90,font=\scriptsize]
(P1) edge [loop left,"",black]   (P1)

(P2) edge ["",black,double, double distance=2pt,-{Stealth[length=10pt]}]   (P4)

(P4) edge ["e",blue]   (P5)

(P3) edge ["",red]   (P2)
(P2) edge ["",red]   (P1)
;

\end{tikzpicture}
\label{fig4a}
    \end{minipage}
\end{figure}

\FloatBarrier

\medskip

 Now we specify what are vertices of $\tilde G$: a vertex $p$ belongs to $\tilde G$ if either $p$ is a vertex of some edge $e\in \tilde G$, or $p$ is an infinite receiver and all the edges that it receives are not in $G'$, or $p$ is an isolated vertex of $G$ (that is, it neither receives nor emits any edges).

\medskip

Some examples are given in Figure \ref{ExempleGtilde}.



\tikzset{every loop/.style={min distance=10mm,in=120,out=240,looseness=40}}
\begin{figure}
\centering
\begin{subfigure}{0.8\textwidth}
\centering
 \begin{tikzpicture}[node distance={15mm}, thick, main/.style = {draw, fill, circle, inner sep=1.5pt},mainLarge/.style = {draw, circle, minimum size=30mm}]

 \def\Unit{1.5}

\node[main, label=below:$ $,black] (P1) at (0.5*\Unit,0*\Unit){};
\node[main, label=below:$ $,black] (P2) at (1.5*\Unit,-0.75*\Unit){};
\node[main, label=below:$ $,black] (P3) at (2.5*\Unit,0*\Unit){};
\node[main, label=below:$ $,black] (P4) at (2*\Unit,1*\Unit){};
\node[main, label=below:$ $,black] (P5) at (1*\Unit,1*\Unit){};
\node[main, label=below:$ $,black] (P6) at (3*\Unit,0*\Unit){};
\node[main, label=below:$ $,black] (P7) at (4*\Unit,0*\Unit){};
\node[main, label=below:$ $,black] (P7b) at (5*\Unit,0*\Unit){};
\node[main, label=below:$ $,black] (P8) at (3*\Unit,1*\Unit){};
\node[main, label=below:$ $,black] (P9) at (1*\Unit,2*\Unit){};
\node[main, label=below:$ $,black] (P10) at (2*\Unit,2*\Unit){};
\node[main, label=below:$ $,black] (P11) at (3*\Unit,2*\Unit){};
\node[main, label=below:$ $,black] (P12) at (4*\Unit,2*\Unit){};
\node[main, label=below:$ $,black] (P13) at (5*\Unit,2*\Unit){};
\node[main, label=below:$ $,black] (P14) at (6*\Unit,2*\Unit){};
\node[main, label=below:$ $,black] (P15) at (7*\Unit,2*\Unit){};
\node[main, label=below:$ $,black] (P16) at (2*\Unit,3*\Unit){};
\node[main, label=below:$ $,black] (P17) at (3*\Unit,3*\Unit){};
\node[main, label=below:$ $,black] (P18) at (4*\Unit,3*\Unit){};
\node[main, label=below:$ $,black] (P19) at (5*\Unit,3*\Unit){};
\node[main, label=below:$ $,black] (P20) at (6*\Unit,3*\Unit){};
\node[main, label=below:$ $,black] (P21) at (3*\Unit,4*\Unit){};

\path[-angle 90,font=\scriptsize]

(P1) edge ["",red]   (P2)
(P1) edge ["",bend left=20,red]   (P2)
(P2) edge ["",black]   (P3)
(P3) edge ["",black]   (P4)
(P4) edge ["",black]   (P5)
(P5) edge ["",black]   (P1)

(P8) edge ["",red]   (P4)
(P8) edge ["",red]   (P6)

(P6) edge ["",blue]   (P7)
(P6) edge [loop left,out=225, in=315,  looseness=20,"",blue]   (P6)
(P7b) edge ["",blue]   (P7)
(P7) edge ["",blue]   (P12)

(P9) edge[out=135, in=225, looseness=20, red] (P9)  
(P9) edge[out=45,  in=135,  looseness=20, red] (P9)  
(P9) edge[out=225, in=315, looseness=20, red] (P9)  


(P10) edge ["",red]   (P9)
(P10) edge ["",black]   (P16)

(P11) edge ["",red]   (P10)
(P11) edge ["",black]   (P12)
(P11) edge ["",red,double, double distance=2pt,-{Stealth[length=10pt]}]   (P8)

(P12) edge ["",blue,double, double distance=2pt,-{Stealth[length=10pt]}]   (P13)

(P13) edge ["",blue]   (P14)

(P14) edge ["",blue]   (P15)

(P17) edge ["",red]   (P11)
(P17) edge ["",black]   (P18)
(P17) edge [loop left, looseness=20,out=135, in=225,"",red]   (P17)

(P18) edge ["",black,double, double distance=2pt,-{Stealth[length=10pt]}]   (P19)

(P19) edge ["",blue]   (P20)

(P21) edge ["",red]   (P17)
;

\end{tikzpicture}
   \caption*{Example a)}
\end{subfigure}
%
\par

\begin{subfigure}{0.8\textwidth}
\centering

\begin{tikzpicture}[node distance={15mm}, thick, main/.style = {draw, fill, circle, inner sep=1.5pt},mainLarge/.style = {draw, circle, minimum size=30mm}]

 \def\Unit{1}

\node[main, label=below:$ $,black] (P1) at (0*\Unit,0*\Unit){};
\node[main, label=below:$ $,black] (P2) at (0*\Unit,1*\Unit){};
\node[main, label=below:$ $,black] (P3) at (0*\Unit,2*\Unit){};
\node[main, label=below:$ $,black] (P4) at (0*\Unit,3*\Unit){};
\node[main, label=below:$ $,blue] (P5) at (1*\Unit,2*\Unit){};

\path[-angle 90,font=\scriptsize]
(P1) edge [loop left,"",blue]   (P1)

(P2) edge ["",red]   (P1)
(P3) edge ["",red]   (P2)
(P4) edge ["",red]   (P3)

(P3) edge ["",black,double, double distance=2pt,-{Stealth[length=10pt]}]   (P5)
;

\end{tikzpicture}
 \caption*{Example b)}
\end{subfigure}
\caption{Examples of $G$ with $G'$ in red and $\tilde{G}$ in blue}
\label{ExempleGtilde}
\end{figure}

\medskip

We are going to prove that the operator norm stability of $C^*(G)$ is completely determined by $\tilde G$.

\medskip


\begin{lemma}\label{notInTildeG} If $e\in G'$, then $e\notin \tilde G$.
\end{lemma}
\begin{proof} Follows immediately from the definition of $\tilde G$.
\end{proof}

\begin{lemma}\label{lemma2MSP} 1) If $e_1\in \tilde G$ and  $s(e)=r(e_1)$, then $e\in \tilde G$,

2) If $e_1\in \tilde G$ and  $s(e)=s(e_1)$, then $e\in \tilde G$.
\end{lemma}
\begin{proof} 1) If $e_1$ satisfies the condition (1) from the definition of $\tilde G$, then $e$ also satisfies the condition (1).
If $e_1$ satisfies the condition (2) or (3), then $e$ satisfies the condition (3). If $e_1$ satisfies the condition (4), then $e$ satisfies the condition (4).

2) Since the four conditions in the definition of $\tilde G$ are formulated in terms of reaching $e$ from some subset of graph, and $s(e)=s(e_1)$, each of these conditions either hold or does not hold  for both $e$ and $e_1$ simultaneously.
\end{proof}

Recall that a C*-algebra is called {\it finite} if no projection in $A$ is equivalent to its proper subprojection.

\begin{lemma}\label{homToFinite} Let $G$ be a graph, $A$ a finite C*-algebra, and $\pi: C^*(G)\to A$ a $\ast$-homomorphism. Let $f\in G$ be an edge that enters a cycle. Then $\pi(f)=0$.
\end{lemma}
\begin{proof} Suppose $\pi(f)\neq 0$.

\noindent We can assume that $f$ enters a cycle $(e_1,\ldots, e_n)$ at the vertex $p_1$. Since for $i=1, \ldots, n-1$
$$e_i^*e_i = p_i, \;\text{and} \; e_ie_i^* \le p_{i+1},$$ we conclude that $p_i$ is equivalent to a subprojection of $p_{i+1}$, and consequently
\begin{equation}\label{homToFinite1} \pi(p_i)\; \text{ is equivalent to a subprojection of}\; \pi(p_{i+1}), \end{equation}
$i=1, \ldots, n-1$. Since $$e_n^*e_n= p_n, \;\text{and}\;  e_ne_n^* + ff^* \le  p_1,$$
(the latter one is because $e_n$ and $f$ have mutually orthogonal ranges), and $\pi(f)\neq 0$, we conclude that
\begin{equation} \label{homToFinite2} \pi(p_n)\; \text{ is equivalent to a proper subprojection of}\; \pi(p_{1}). \end{equation}
From (\ref{homToFinite1}) and (\ref{homToFinite2}) it follows that $\pi(p_1)$ is equivalent to a proper subprojection of itself. Since $A$ is finite, we conclude that $\pi(p_1)=0$. But then, since $\pi(f)\pi(f)^* \le \pi(p_1)$, $\pi(f)=0$, contradiction.

\end{proof}

\begin{lemma}\label{lemma3MSP} Let $A$ be a finite C*-algebra and $\pi: C^*(G)\to A$ a $\ast$-homomorphism. Let  $e$ be an edge such that  $s(e) = s(f)$, for some $f\in G'$. Then $\pi(e)=0$.
\end{lemma}
\begin{proof} First, assume that $e\in G'$. Then it belongs to a path $\nu = (\nu_n, \ldots, \nu_1)$ leading to a cycle. By Lemma \ref{homToFinite}, $\pi(\nu_n)=0$. Since
$$\nu_{n-1}\nu_{n-1}^* = r(\nu_{n-1}) = s(\nu_n) = \nu_n^*\nu_n, $$ we obtain
$$\pi(\nu_{n-1}\nu_{n-1}^*) = \pi(\nu_n^*\nu_n) =0.$$ So $\pi(\nu_{n-1})=0$. Continuing in the same way, we obtain $\pi(\nu_i)=0, $ for each $i$.
It follows that $\pi(e)=0$, for any $e\in G'$.

Now let $e$ with $s(e) = s(f)$, for some $f\in G'^1$, be arbitrary. By what is proved above $\pi(f)=0$. Hence $\pi(e)^*\pi(e)=\pi(f)^*\pi(f) =0$.
\end{proof}



\begin{lemma}\label{lemmaVanishingOnCycle} Let $A$ be a finite C*-algebra and $\pi: C^*(G)\to A$ a $\ast$-homomorphism. Let $c$ be a cycle which has an edge that belongs to $G'$. Then $\pi(e)=0$, for any $e\in G$ with $s(e)\in c$.
\end{lemma}
\begin{proof} Let $f\in c\bigcap G'$. By Lemma \ref{lemma3MSP} $\pi(f)=0$. Let $p_1, \ldots, p_n$ be the vertices of $c$. WLOG we can assume that $s(f)= p_n$.  Then $\pi(p_n)=0$. Since $p_{n-1}$ is equivalent to a subprojection of $p_n$, $\pi(p_{n-1})=0.$ Continuing, we obtain that $\pi$ vanishes on all the vertices of $c$. Then for any $e\in G$ with $s(e)\in c$ we have $\pi(e)^*\pi(e)=0$.
\end{proof}


\begin{theorem}\label{mainIngredient} Let $A$ be a finite C*-algebra and $\pi: C^*(G)\to A$ a $\ast$-homomorphism. Then for any edge $e\notin \tilde G$,
$\pi(e)=0$.
\end{theorem}
\begin{proof}

{\bf Claim 1}: If $e\in G\setminus \tilde G$ and $\pi(e)\neq 0$, then $s(e)\notin G'$.

\medskip

{\it Proof of Claim 1}: Suppose $s(e)\in G'$. By Lemma \ref{lemma3MSP}, $s(e)\neq s(f)$, for any $f\in G'$. Hence $s(e)$ belongs to a  cycle $c$.
If none of the edges of $c$ is in $G'$, then $e$ satisfies the condition (1) from the definition of $\tilde G$, that contradicts to the fact that $e\in G\setminus \tilde G$. If at least one of the edges of $c$ belongs to $G'$, then by Lemma \ref{lemmaVanishingOnCycle} $\pi(e)=0$. Contradiction.
 Claim is proved.

\bigskip

{\bf Claim 2}: If $e\in G\setminus \tilde G$ and $\pi(e)\neq 0$, then there exists $e_1\in G\setminus \tilde G$ such that
$$r(e_1) = s(e)$$
$$s(e_1)\neq s(e)$$
$$\pi(e_1)\neq 0.$$

\medskip

{\it Proof of Claim 2}:
By Claim 1, $s(e)\notin G'$. On the other hand, since $e$ does not satisfy the condition (2) from the definition of $\tilde G$, $e$ lies on a path starting in $G'$. Therefore in this path there is an edge preceding $e$ with source not equal to $s(e)$. Hence the set $$E:=  \{h\;|\; r(h) = s(e), \; s(h) \neq s(e)\} \neq \emptyset.$$ Since there are only finitely many vertices in the graph, if $E$ was infinite, there would be a vertex $p\neq s(e)$ and infinitely many edges $f_i$ with $s(f_i)=p$ and $r(f_i) = s(e)\notin G'$. This contradicts to the fact that $e$ does not satisfy the condition (4) from the definition of $\tilde G$. Thus $E$ is a finite non-empty set.

We observe that $s(e)$ cannot lie on a cycle, because otherwise $E\subset G'$ and therefore $s(e)\in G'$ which we know is not. Thus
$$E = \{h\;|\; r(h) = s(e)\}$$ and then we have the following Cuntz-Crieger relation:

$$s(e) = \sum_{h\in E} hh^*.$$ Then
$$0\neq \pi(e)^*\pi(e) = \pi(s(e)) = \sum_{h\in E}\pi(h)\pi(h)^*.$$
Therefore there exists $e_1\in E$ with $\pi(e_1)\neq 0$. By Lemma \ref{lemma2MSP}, $e_1\in G\setminus \tilde G$. Claim is proved.

\medskip

Now let $e\in G\setminus \tilde G$. Suppose, for the sake of contradiction, that $\pi(e)\neq 0$. We find $e_1$ as in Claim 2. For $e_1$ we find $e_2$ as in Claim 2, and so on. We obtain edges $e_i\in G\setminus \tilde G$, $i\in \mathbb N$, such that $r(e_i) = s(e_{i-1})$, $s(e_i)\neq s(e_{i-1}), \pi(e_i)\neq 0$. Since our graph has finitely many vertices, the set
$$\{s(e_i)\;|\; i\in \mathbb N\}$$ is finite, hence there exists $N$ such that
$$s(e_N) = s(e_i),$$ for some $i<N$. Then $$c: = (e_N, e_{N-1}, \ldots, e_i)$$ is a cycle. Since $\pi(e_j)\neq 0$ and $e_j\in G\setminus \tilde G$,   $j=i, \ldots, N$,  by Claim 1 $s(e_j)\notin G'$. Therefore $e_j\notin G'$, $j=i, \ldots, N$.  But then $e$ satisfies the condition (1) from the definition of $\tilde G$. This contradicts to the fact that $e\in G\setminus \tilde G$.
\end{proof}

\begin{corollary}\label{vanishingOnVertices} Let $A$ be a finite C*-algebra and $\pi: C^*(G)\to A$ a $\ast$-homomorphism. Then for any vertex $p\notin \tilde G$,
$\pi(p)=0$.
\end{corollary}
\begin{proof} If $p$ is a source of some $e\notin  \tilde G$, then $\pi(p)=0$ by previous theorem and one of the Cuntz-Krieger relations.
  
  If $p$ was a source of some $e\in \tilde G$ or if it was an isolated vertex, then $p$ would belong to $\tilde G$. 
  
  Thus we can assume that $p$ does not emit any edges  and receives some edge. Since $p\notin  \tilde G$, $p$ receives only edges of $G\setminus \tilde G$. If it receives finitely many edges, then $\pi(p)=0$ by previous theorem and another  Cuntz-Krieger relation. If it receives infinitely many edges, at least one of them belongs to $G'$ because otherwise $p$ would belong to $\tilde G$ by the item (4) in the definition of $\tilde G$. But then $p$ lies either on a path leading to a cycle  or on a cycle and therefore emits some edge   which contradicts to what we have assumed.
\end{proof}

Let $H$ be a subgraph of $G$. Similar to notations of section 3, if $p\in H$ is a vertex, then  we use notation $p$ for the corresponding projection in $C^*(G)$ and notation $\bar p$ for the corresponding projection in $C^*(H)$. Similar notation is used for edges of $H$.

\begin{lemma}\label{RestrictionUnderCondition} Let $H$ be a subgraph of a graph $G$, $B$ a C*-algebra,  and $\rho: C^*(G) \to B$ a $\ast$-homomorphism such that
$$\rho(p) = \sum_{e\in r^{-1}(p)\bigcap H} \rho(e)\rho(e)^*,$$ for each $p\in H$ such that $0< \sharp(r^{-1}(p)\bigcap H) < \infty$.  Then there exists a unique $\ast$-homomorphism $\tilde \rho: C^*(H) \to B$
such that $$\tilde \rho(\bar p) = \rho(p),\;  \tilde\rho(\bar e) = \rho(e),$$ for any $p, e\in H$.
\end{lemma}

\begin{proof} We only need to check that $$\tilde \rho(\bar p):= \rho(p),\;  \tilde\rho(\bar e) := \rho(e),$$ where $p, e\in H$, satisfy the Cuntz-Krieger relations for $C^*(H)$. It is clear that $\{\tilde \rho(\bar p)\;|\;p\in H\}$ is a collection of pairwise orthogonal projections and $\{\tilde \rho(\bar e)\;|\;e\in H\}$ is a collection of partial isometries
with mutually orthogonal ranges. Let $\bar p\in H$. For any $\bar e\in H$ with $s(e)=p$ we have $$\tilde\rho(\bar p) = \rho(p) = \rho(e)^*\rho(e) = \tilde\rho(e)^*\tilde\rho(e).$$ For any $\bar e\in H$ with $r(e)=p$ we have
$$\tilde\rho(\bar e)\tilde\rho(\bar e)^* = \rho(e)\rho(e)^*\le \rho(p) = \tilde\rho(\bar p).$$
If $0< \sharp(r^{-1}(p)\bigcap H) < \infty$, then, using our assumption on $\rho$ we obtain $$\tilde\rho(\bar p) = \rho(p) =   \sum_{e: r(e) =p, e\in H}
\rho(e)\rho(e)^* = \sum_{e: r(e) =p, e\in H} \tilde\rho(\bar e)\tilde\rho(\bar e)^*.$$
We have shown that all the Cuntz-Krieger relations involving $\bar p$  are satisfied.
\end{proof}


\begin{lemma}\label{surjection} Let $H$ be a subgraph of a graph $G$ such that $$s(e)\in H \Rightarrow e\in H$$
 and, if $p\in H$ and $0< \sharp(r^{-1}(p))<\infty$, then $r^{-1}(p)\bigcap H \neq \emptyset$. 
 Then the map $\alpha: G\to H$ defined by
$$\alpha(p) = \begin{cases} \bar p, p\in H\\ 0, p\notin H, \end{cases}\;   \alpha(e) = \begin{cases} \bar e, e\in H\\ 0, e\notin H\end{cases}$$ extends to a surjective $\ast$-homomorphism $\hat \alpha: C^*(G)\to C^*(H)$.
\end{lemma}
\begin{proof} It suffices to check that $\alpha(p), \alpha(e)$, where $p\in G, e\in G$, satisfy Cuntz-Krieger relations for $C^*(G)$. It is clear that $\{\alpha( p)\;|\;p\in G\}$ is a collection of pairwise orthogonal projections and $\{\alpha(e)\;|\;e\in G\}$ is a collection of partial isometries
with mutually orthogonal ranges. Let $p\in G$. If $p\notin H$, then all edges going to and out of $p$ are also not in $H$. Therefore all  Cuntz-Krieger relations involving $p$ are clearly satisfied. So let $p\in H$. Using our assumption on $H$, for any $e$ with $s(e)=p$ we obtain
$$\alpha(e)^*\alpha(e) = \bar e^*\bar e = \bar p = \alpha(p).$$
For any $e\in G$ with $r(e)=p$ we have
$$\alpha(e)\alpha(e)^* = \begin{cases} \bar e\bar e^*,e\in H\\ 0, e\notin H \end{cases} \le  \bar p = \alpha(p).$$
We also obtain, using our second assumption, that  $$\sum_{e: r(e) =p}\alpha(e)\alpha(e)^* = \sum_{e: r(e) =p, e\in H}\alpha(e)\alpha(e)^* = \sum_{e: r(e) =p, e\in H}\bar e\bar e^* = \bar p = \alpha(p),$$ when $0< \sharp(r^{-1}(p)) < \infty$.
\end{proof}

\begin{lemma}\label{tildeGfinite} $C^*(\tilde G)$ is matricially semiprojective if and only if $\tilde G$ is finite.
\end{lemma}
\begin{proof} Let $H_1$ be the union of all cycles of $\tilde G$ and let $H_2$ be the rest of $\tilde G$ which is necessarily a forest. By the construction of $\tilde G$,  in $\tilde G$ no cycle has an entry. Hence, by Theorems \ref{case1} and \ref{case2}, $C^*(\tilde G)$ is the amalgamated free product
$$C^*(\tilde G) = \left(C^*(H_1)(\oplus \;\text{possibly} \;\mathbb C)\right) \ast_{\mathbb C^N} \left(C^*(H_1)(\oplus \;\text{possibly}\; \mathbb C)\right),$$ where $N\in \mathbb N$. Since cycles in $\tilde G$ are disjoint, $C^*(H_1) =  C(\mathbb T)^{\oplus m}$, for some $m\in\mathbb N$.  Hence $C^*(H_1)(\oplus \;\text{possibly}\; \mathbb C)$ is semiprojective. If $\tilde G$ is finite, then $H_2$ is finite. Since $H_2$ is a forest, $C^*(H_1)(\oplus \;\text{possibly}\; \mathbb C)$ is finite-dimensional, hence semiprojective. Since semiprojectivity passes to free products amalgamated over finite-dimensional subalgebras (\cite[Prop. 2.32]{BlackadarShapeTheory}), $C^*(\tilde G)$ is semiprojective, hence matricially semiprojective.

The other way around, suppose $C^*(\tilde G)$ is matricially semiprojective. Since in $\tilde G$ no cycle has an entry, $C^*(\tilde G)$ is quasidiagonal by \cite{Schafhauser}. Matricial semiprojectivity implies then that $C^*(\tilde G)$ is RFD.  By Theorem \ref{RFD} $\tilde G$ is finite.
\end{proof}

\begin{lemma}\label{fixingMistake} $H= \tilde G$ satisfies the conditions of Lemma \ref{surjection}.
\end{lemma}
\begin{proof} It follows from definition of $\tilde G$ that
$$s(e)\in \tilde G \Rightarrow e\in \tilde G.$$ Let $p\in \tilde G$ and $0< \sharp(r^{-1}(p))<\infty$. We need to prove that $p$ is the range of some edge of $\tilde G$. Since $p\in \tilde G$ and $p$ is not an infinite receiver and not an isolated vertex, $p$ is  a vertex of some edge $e\in \tilde G$ (see definition of $\tilde G^{(0)}$ on p.15).   

If $p= r(e)$, we are done.

So we can assume $p= s(e)$. Since $e\in \tilde G$, $e$ satisfies one of the conditions (1)-(4). 

Suppose $e$ satisfies (1). Then it can be reached from a cycle $c$ all of whose edges are not in $G'$. Then this cycle and each vertex of the path starting at $c$ and leading to $e$ are in $\tilde G$. In particular, the edge of this path that enters $e$ (possibly it is an edge of the cycle) belongs to $\tilde G$ and has range $p$, so we are done.

Suppose $e$ satisfies (2). Then any $f\in r^{-1}(p)\neq \emptyset$ also satisfies (2) and therefore $f\in \tilde G$, so we are done. 

Suppose $e$ satisfies (3). Then there is a path starting at a vertex of an edge satisfying (2) and leading to $e$. The edge of this path that enters $e$ satisfies either (3) or (2), hence belongs to $\tilde G$, so we are done.

It remains to notice that $e$ does not satisfy the condition (4) by the assumption. 
\end{proof}

\begin{theorem}\label{MSP1} If $C^*(G)$ is matricially semiprojective, then $\tilde G$ is finite.
\end{theorem}
\begin{proof} We are going to prove that if $C^*(G)$ is matricially semiprojective, then $C^*(\tilde G)$ is matricially semiprojective.
Then Lemma \ref{tildeGfinite} will finish the proof.

Let $\rho: C^*(\tilde G) \to \prod M_n/\oplus M_n$ be a $\ast$-homomorphism. 
 By Lemma \ref{fixingMistake}, $\tilde G$ satisfies the condition of Lemma \ref{surjection}. Let $\alpha: C^*(G)\to C^*(\tilde G)$ be as in Lemma \ref{surjection}. Since $C^*(G)$ is matricially semiprojective, $\rho\circ\alpha$ lifts.
 
\bigskip

{\bf Claim}: There exists a lift  $\psi: C^*(G) \to \prod M_n$ of $\rho\circ\alpha$ such that $$\psi(p) = \sum_{e\in r^{-1}(p)\bigcap \tilde G} \psi(e)\psi(e)^*,$$ for each $p\in \tilde G$ with $0< \sharp(r^{-1}(p)\bigcap \tilde G) < \infty$.

\bigskip

{\it Proof of Claim}: Let a $\ast$-homomorphism $\psi:C^*(G) \to \prod M_n$ be some lift of $\rho\circ\alpha$. Then for any vertex $p\in \tilde G$ with $0< \sharp(r^{-1}(p)\bigcap \tilde G) < \infty$, $\psi(p) - \sum_{e\in r^{-1}(p)\bigcap \tilde G} \psi(e)\psi(e)^*$ is a projection. Its image under the quotient map $\pi: \prod M_n \to \prod M_n/\oplus M_n$ is $$\rho\circ\alpha(p) - \sum_{e\in r^{-1}(p)\bigcap \tilde G} \rho\circ \alpha(e)\rho\circ\alpha(e)^* =
\rho(\bar p) - \sum_{e\in r^{-1}(p)\bigcap \tilde G} \rho(\bar e)\rho(\bar e)^* =0.$$ Therefore $$\psi(p) - \sum_{e\in r^{-1}(p)\bigcap \tilde G} \psi(e)\psi(e)^*\in \oplus M_n.$$ Any projection in $\oplus M_n$ has all but finitely many coordinates equal zero. So there is $N(p)\in \mathbb N$  such that all but the first $N(p)$ coordinates of the projection $\psi(p) - \sum_{e\in r^{-1}(p)\bigcap \tilde G} \psi(e)\psi(e)^*$ are zeros. Since $G$ has finitely many vertices, so does $\tilde G$. Replacing coordinates of $\psi$ by zero maps on the first  $\max_{p\in \tilde G, 0< \sharp(r^{-1}(p)\bigcap \tilde G) < \infty}N(p)$ places,   we obtain a $\ast$-homomorphism, which we again denote by $\psi$, that is  still a  lift of $\rho\circ \alpha$ and satisfies the condition $$\psi(p) = \sum_{e\in r^{-1}(p)\bigcap \tilde G} \psi(e)\psi(e)^*,$$ for each $p\in \tilde G$ such that $0< \sharp(r^{-1}(p)\bigcap \tilde G) < \infty$. Claim is proved.

\bigskip

By Lemma \ref{RestrictionUnderCondition}, there is a $\ast$-homomorphism $\tilde \psi: C^*(\tilde G) \to \prod M_n$
such that $$\tilde \psi(\bar p) = \psi(p),\;  \tilde\psi(\bar e) = \psi(e),$$ for any $p, e\in \tilde G$.
Since $\prod M_n$ is a finite C*-algebra, by Theorem \ref{mainIngredient} and Corollary \ref{vanishingOnVertices} the $\ast$-homomorphism $\psi$ vanishes on all $p, e\notin \tilde G$. It implies that $\psi$ and $\tilde\psi\circ\alpha$ coincide on all the generators of $G$. Therefore $$\psi=\tilde\psi\circ\alpha.$$ Thus $\pi\circ \tilde\psi\circ\alpha =\rho\circ\alpha$. Hence $\pi\circ\tilde\psi$ and $\rho$ coincide on the image of $\alpha$. Since $\alpha$ is surjective, we conclude that $\pi\circ\tilde\psi =\rho$. We have shown that an arbitrary  $\ast$-homomorphism $\rho: C^*(\tilde G) \to \prod M_n/\oplus M_n$ is liftable. Thus $C^*(\tilde G)$ is matricially semiprojective.
\end{proof}

\bigskip

If there exists infinitely many edges with source in vertex $p$ and range in a vertex $q$, then we will say that between $p$ and $q$ there is an {\it edge of infinite multiplicity}.

\bigskip

\begin{lemma}\label{baseOfInduction} Suppose $\tilde G$ is finite and each vertex $p\in \tilde G$ satisfying $0< \sharp(r^{-1}(p)\bigcap \tilde G) < \infty $ is either 1) not the range of an edge of infinite multiplicity belonging to $G\setminus \tilde G$ (that is there is no $q\in G$  such that the collection $\{e\;|\; s(e)=q, r(e)=p, e\notin \tilde G\}$ is infinite)
or 2) the range of an edge of infinite multiplicity belonging to $G'$ (that is there is  $q\in G$  such that the collection $\{e\;|\; s(e)=q, r(e)=p, e\in G'\}$ is infinite). Then $C^*(G)$ is matricially semiprojective.
\end{lemma}
\begin{proof} Let $\rho: C^*(G)\to \prod M_n/\oplus M_n$. Let $p\in \tilde G$ be such that $$0< \sharp(r^{-1}(p)\bigcap \tilde G) < \infty.$$
 If $p$ is the range of an edge (of infinite multiplicity) belonging to $G'$, then  $p$ belongs to both $G'$ and $\tilde G$ and therefore must be a  vertex of a cycle. Then any edge with range $p$ lies in $G'$. Then, by Lemma \ref{notInTildeG}, $p$ does not receive any edges of $\tilde G$ which contradicts to the fact that $\sharp(r^{-1}(p)\bigcap \tilde G)>0$. Therefore $p$ cannot be the range of an edge (of infinite multiplicity) belonging to $G'$ and then, by the assumption,  $p$ is not the range of an edge of infinite multiplicity belonging to $G\setminus \tilde G$.

  Then  there exists only  finitely many edges of $G\setminus \tilde G$  with range in $p$. Hence
$$0< \sharp(r^{-1}(p)) < \infty.$$ Then $$p= \sum_{e: r(e)=p, e\in G} ee^*,$$ and therefore
$$\rho(p) =  \sum_{e: r(e)=p, e\in G} \rho(e)\rho(e)^*.$$
By Lemma \ref{mainIngredient}, $\rho(e)=0$, for each $e\in G\setminus \tilde G$. Hence
$$\rho(p) =  \sum_{e: r(e)=p, e\in \tilde G} \rho(e)\rho(e)^*.$$
By Lemma \ref{RestrictionUnderCondition} there exists a unique $\ast$-homomorphism $\tilde \rho: C^*(\tilde G)\to \prod M_n/\oplus M_n$ such that
$$\tilde\rho(\bar p) =\rho(p), \; \tilde \rho(\bar e)= \rho(e),$$
for $\bar p, \bar e\in \tilde G.$   By Lemma \ref{fixingMistake}, $\tilde G$ satisfies the condition of Lemma \ref{surjection}. Let $\alpha: C^*(G)\to C^*(\tilde G)$ be as in Lemma \ref{surjection}. Then for any $e\in G$,

\begin{multline*}\tilde\rho\circ\alpha(e) = \begin{cases} \tilde\rho(\bar e), e\in \tilde G\\ 0, e\notin \tilde G \end{cases} =
\begin{cases} \rho(e), e\in \tilde G\\ 0, e\notin \tilde G \end{cases} = ^{Lemma\ref{mainIngredient}} \rho(e).\end{multline*}
Similarly, using Corollary \ref{vanishingOnVertices} we obtain
$$\tilde\rho\circ\alpha(p) = p, $$ for any vertex $p\in G$. Thus $$\tilde\rho\circ \alpha = \rho.$$
Since $\tilde G$ is finite, by Lemma \ref{tildeGfinite} $C^*(\tilde G$) is matricially semiprojective. Hence $\tilde \rho$ lifts to some $\ast$-homomorphism $\psi: C^*(\tilde G)\to \prod M_n/\oplus M_n$. Then $\psi\circ\alpha$ is a lift of $\rho$.
\end{proof}

\begin{theorem}\label{ifTildeGfinite}  If $\tilde G$ is finite, then $C^*(G)$ is matricially semiprojective.
\end{theorem}
\begin{proof} We use induction on the number of vertices $p$ satisfying the following three conditions:

\begin{enumerate}[label=(\alph*)]

\item $0< \sharp(r^{-1}(p)\bigcap \tilde G) < \infty,$

\item $p$ is the range of an edge of infinite multiplicity belonging to $G\setminus \tilde G$

\item $p$ is not the range of an edge of infinite multiplicity belonging to $G'$.

\end{enumerate}

\medskip

\noindent When the number of such vertices is zero, the statement is proved in Lemma \ref{baseOfInduction}.  Suppose it is proved for $k\le N$. We will prove it for $k=N+1$. So suppose $G$ has $N+1$ vertices satisfying the conditions (a)-(c) and let $p$ be one of these vertices.

Let $H$ be the graph obtained from $G$ by removing the set of edges $\{e\in \tilde G\;|\; r(e)=p\}.$

\bigskip

{\bf Claim 1:} $H'=G'$ and $\tilde H\subset \tilde G$.

\medskip

{\it Proof of Claim 1}: We have $H'\subset G'$. To show the opposite implication, let $e\in G$ be an edge that leads to a cycle $c\subset G$. We need to prove that $e\in H, c\subset H$. If $e\notin H$, then $r(e)= p$. Then from $p$ one can reach $c$ and therefore all edges with range $p$ must belong to $G'$. This contradicts to the fact that $p$ satisfies the conditions (b) and (c). Thus $e\in H$.
Suppose  $c\not\subset H$. Then there is $f\in c$ such that $r(f)=p$ (and $f\in \tilde G$). Then $p\in c$. Hence an edge of infinite multiplicity with range $p$ that exists by the condition (b) must belong to $G'$ which contradicts to (c). So $c\subset H$. We have proved that $H'=G'$.

Let $e\in \tilde H$. We need to prove that $e\in \tilde  G$. If $e$ satisfies the condition (1) from the definition of $\tilde H$, then, since $H'=G'$, it satisfies the condition (1) from the definition of $\tilde G$, so $e\in \tilde G$.

Suppose $e$ satisfies the condition (2) from the definition of $\tilde H$. If it satisfies the condition (2) from the definition of $\tilde G$, then $e\in \tilde G$. If it does not, then $e$ can be reached from $G'=H'$ and the corresponding path contains an edge $f\notin H$. Then $f\in \tilde G$. By Lemma \ref{lemma2MSP} all edges that follow $f$, in particular $e$, belong to $\tilde G$.

Suppose $e$ satisfies the condition (3) from the definition of $\tilde H$ which means that it can be reached (inside $H$) from a vertex of an edge $f\in H$ satisfying the condition (2) from the definition of $\tilde H$. By what is proved above $f\in \tilde G$. By Lemma \ref{lemma2MSP} $e\in \tilde G$.

If $e$ satisfies the condition (4) from the definition of $\tilde H$, then, since $H'=G'$, it satisfies the condition (4) from the definition of $\tilde G$, so $e\in \tilde G$. Claim is proved.

\bigskip

\noindent We conclude from Claim 1 that $\tilde H$ is finite. By the construction of $H$, $H$ has $N$ vertices satisfying the conditions (1)-(3). By the induction assumption, $C^*(H)$ is matricially semiprojective.

Let $\rho: C^*(G) \to \prod M_n/\oplus M_n$ be a $\ast$-homomorphism.  For any vertex $q\in H$ with $\sharp(r^{-1}(q)\bigcap H) < \infty$ we have
 $q\neq p$. Hence for such $q$, $\sharp(r^{-1}(q)\bigcap H) = \sharp(r^{-1}(q)).$ Then  for $q\in H$ with $0< \sharp(r^{-1}(q)\bigcap H) < \infty$ we have $0<\sharp(r^{-1}(q))<\infty$ and then
 $$\rho(q) = \sum_{e\in r^{-1}(q)} \rho(e)\rho(e)^* = \sum_{e\in r^{-1}(q)\bigcap H} \rho(e)\rho(e)^*.$$ By Lemma \ref{RestrictionUnderCondition} there is  $\ast$-homomorphism $\tilde \rho:C^*(H)\to \prod M_n/\oplus M_n$ such that $$\tilde\rho(\bar e) = \rho(e), \tilde\rho(\bar p) = \rho(p),$$ for $e, p\in H$. Since $C^*(H)$ is matricially semiprojective, $\tilde\rho$ lifts to some $\ast$-homomorphism $\tilde\psi: C^*(H)\to \prod M_n$.

 Let us denote the edges of $\tilde G$ that enter $p$ (that is, exactly those edges that were removed when we defined $H$) by $f_1, \ldots, f_m$.

 \bigskip

 {\bf Claim 2}: The projections $\rho(f_1)\rho(f_1)^*,  \ldots, \rho(f_m)\rho(f_m)^*$ lift to pairwisely orthogonal subprojections $R_1, \ldots, R_m$ of $\tilde\psi(\bar p)$ such that   $R_i$ is orthogonal to $\tilde\psi(\bar e)$, for each $i=1, \ldots, m$ and each $e\notin \tilde G$ with $r(e) =p$.

\medskip

{\it Proof of Claim 2}: Standard. For elements $a$ of $\prod M_k$ we will  use notation $a= (a_k)_{k\in \mathbb N}$.  The set
$\{\tilde \psi(\bar e)\tilde\psi(\bar e)^*\;|\; e\notin \tilde G, r(e)=p\}$ is an infinite set of projections in $\prod M_k$.  Since in $M_k$ there are only finitely many pairwise orthogonal projections, the sum $\sum_{e\notin \tilde G, r(e)=p} \left(\tilde \psi(\bar e)\tilde\psi(\bar e)^*\right)_k$ is finite, for each $k\in \mathbb N$.  We denote by $\sum_{e\notin \tilde G, r(e)=p} \tilde \psi(\bar e)\tilde\psi(\bar e)^*$ the projection
$$\sum_{e\notin \tilde G, r(e)=p} \tilde \psi(\bar e)\tilde\psi(\bar e)^*: = \left(\sum_{e\notin \tilde G, r(e)=p} \left(\tilde \psi(\bar e)\tilde\psi(\bar e)^*\right)_k\right)_{k\in \mathbb N}.$$ Let
$$Q= (Q_k)_{k\in\mathbb N}:= \tilde\psi(\bar p) - \sum_{e\notin \tilde G, r(e)=p} \tilde \psi(\bar e)\tilde\psi(\bar e)^*.$$ Then
$$\rho(f_i)\rho(f_i)^* \in \prod Q_kM_kQ_k/\oplus Q_kM_kQ_k,$$ $i=1, \ldots, m$, are pairwise orthogonal projections.  Since $\mathbb C^m$ is matricially semiprojective,   $\rho(f_i)\rho(f_i)^*,$ $i=1, \ldots, m$, lift to pairwise orthogonal projections $R_i\in \prod Q_kM_kQ_k$, $i=1, \ldots, m$.  Then each $R_i$ is orthogonal to $\tilde\psi(\bar e)\tilde\psi(\bar e)^*,$ for $e\notin \tilde G$ with $r(e)=p$, and $R_i\le Q\le \tilde\psi(\bar p)$. Claim 2 is proved.

\bigskip

By \cite[Prop. 2.23]{BlackadarShapeTheory} $\rho(f_i)$ lifts to a partial isometry $u_i\in \prod M_k$ such that
$$u_i^*u_i = \tilde\psi(\bar f_i^*\bar f_i), \;\; u_iu_i^* = R_i,$$ $i=1, \ldots, m$.  Since $p$ is the range of an edge of infinite multiplicity, the CK-relations in $G$ involving $p$ are

\medskip

1) $f_if_i^*\le p, \;i=1, \ldots, m$,

2) $ee^*\le p$, for $e\notin\{f_1, \ldots, f_m\}$ with $r(e)=p$,

3) $e^*e=p, $ for $e$ with $s(e)=p$.

\medskip

We note that none of $f_i$'s  is a loop because otherwise the assumptions $(b)-(c)$ on $p$ would not hold.  Hence the relation 3) does not involve $f_i$'s.  Therefore the correspondence
$$f_i\mapsto u_i, $$
$$e\mapsto \tilde\psi(\bar e), $$
where $\;i=1, \ldots, m,$ $e\notin \{f_1, \ldots, f_m\}$, defines a $\ast$-homomorphism $\psi: C^*(G)\to \prod M_k$ which is a lift of $\rho$.
\end{proof}

From Theorem \ref{MSP1}  and Theorem \ref{ifTildeGfinite} we obtain

\begin{theorem} $C^*(G)$ is matricially semiprojective if and only if $\tilde G$ is finite.
\end{theorem}

\end{document}